\documentclass[11pt]{article}

\usepackage{booktabs}

\usepackage{algorithm}
\usepackage{algorithmic}

\usepackage[USenglish]{babel}
\usepackage[rgb]{xcolor}
\usepackage{subcaption}
\usepackage{nicefrac}

\usepackage[table]{xcolor}

\usepackage[hyphens]{url}

\usepackage{fullpage}

\usepackage{tikz}
\usetikzlibrary{shapes.geometric}
\usetikzlibrary{fit}
\usetikzlibrary{arrows}
\usetikzlibrary{patterns}
\usetikzlibrary{matrix}
\usetikzlibrary{positioning}
\usetikzlibrary{backgrounds}
\usetikzlibrary{shadows}
\usetikzlibrary{patterns, ,decorations.pathmorphing}
\usetikzlibrary{positioning,arrows.meta,shadows.blur}

\pgfdeclarelayer{background}
\pgfdeclarelayer{main}
\pgfdeclarelayer{foreground}
\pgfsetlayers{background,main,foreground}

\usepackage{pgfplots}
\pgfplotsset{compat=1.18}

\usepackage[margin=1in]{geometry}
\usepackage{setspace}
\setlength{\intextsep}{2mm}
\usepackage{csquotes}
\usepackage{microtype}
\usepackage{amsmath, amssymb,amsthm,thmtools,thm-restate}

\usepackage{mathabx}
\DeclareMathOperator{\E}{\mathbb{E}}

\usepackage{ifxetex}
% Font
\ifxetex
\usepackage[bigdelims,vvarbb]{newpxmath}
\usepackage{mathspec}
\setallmainfonts(Digits,Latin){Noto Serif}
\else
\usepackage{mathpazo}
\fi

\usepackage{bm}
\usepackage{enumitem}
\usepackage[colorlinks=true,allcolors=linkColor, hypertexnames=false]{hyperref} 
\usepackage{nameref}
\usepackage[nameinlink]{cleveref}
\usepackage{stmaryrd}
\usepackage[round]{natbib}

\newcommand{\citef}[1]{\citeauthor{#1}~\citeyear{#1}}
\def\hy{\hbox{-}\nobreak\hskip0pt}

\DeclareMathSymbol{\mlq}{\mathord}{operators}{``}
\DeclareMathSymbol{\mrq}{\mathord}{operators}{`'}

\newcommand{\antipodal}[1]{\bar{#1}}
\newcommand{\dist}[2]{d(#1,#2)}
\newcommand{\fprime}{\hat{f}}

\renewcommand\epsilon\varepsilon
\renewcommand\emptyset\varnothing
\newcommand\Crestrict[2]{{% we make the whole thing an ordinary symbol
  \left.\kern-\nulldelimiterspace{}% automatically resize the bar with \right
  #1 % the function
  %\vphantom{\big|} % pretend it's a little taller at normal size
  \right|_{#2} % this is the delimiter
  }}

\definecolor{color1}{Hsb}{100, 0.8, 0.8}
\definecolor{color2}{Hsb}{220, 0.8, 0.8}
\definecolor{color3}{Hsb}{340 , 0.8, 0.8}
% \definecolor{linkColor}{Hsb}{268, 1.0, 0.8}
\definecolor{linkColor}{Hsb}{11, 0.7, 0.6}

\declaretheorem[shaded={bgcolor=color2!10}]{theorem}
\declaretheorem[sibling=theorem, shaded={bgcolor=color1!10}]{lemma, corollary, proposition, conjecture}

\theoremstyle{definition}

\theoremstyle{remark}

% Added so that pdflatex/biber works on my computer (not just in Overleaf),
% following this advice: https://tex.stackexchange.com/questions/443018/package-inputenc-error-unicode-char-%CC%81-u301inputenc
\DeclareUnicodeCharacter{0301}{\'{e}}

% \setkomafont{author}{\normalsize}
\newcommand{\lexprec}{\prec_{\text{lex}}}
\title{
From the Finite to the Infinite:\\ Sharper Asymptotic Bounds on Norin's Conjecture 
via SAT }
\author{Markus Kirchweger \\ TU Wien \\ \url{mk@ac.tuwien.ac.at} \and Tomáš Peitl \\ TU Wien\\ \url{peitl@ac.tuwien.ac.at}  \and  Bernardo Subercaseaux \\ Carnegie Mellon University \\ \url{bersub@cmu.edu}
\and Stefan Szeider \\ TU Wien \\ \url{sz@ac.tuwien.ac.at}}
% \date{May 2025}

% \newtheorem{theorem}{Theorem}
% \newtheorem{conjecture}[theorem]{Conjecture}
% \newtheorem{proposition}[theorem]{Proposition}
% \newtheorem{corollary}[theorem]{Corollary}
% \newtheorem{lemma}[theorem]{Lemma}

\begin{document}
\maketitle
\begin{abstract}
    Norin (2008) conjectured that any $2$-edge-coloring of the hypercube $Q_n$ in which antipodal edges receive different colors must contain a monochromatic path between some pair of antipodal vertices. While the general conjecture remains elusive, progress thus far has been made on two fronts: finite cases and asymptotic relaxations. The best finite results are due to Frankston and Scheinerman (2024) who verified the conjecture for $n \leq 7$ using SAT solvers, and the best asymptotic result is due to Dvořák (2020), who showed that every $2$-edge-coloring of $Q_n$ admits an antipodal path of length $n$ with at most $0.375n + o(n)$ color changes. We improve on both fronts via SAT.  First, we extend the verification to $n = 8$ by introducing a more compact and efficient SAT encoding, enhanced with symmetry breaking and cube-and-conquer parallelism. The versatility of this new encoding allows us to recast parts of Dvořák's asymptotic approach as a SAT problem, thereby improving the asymptotic upper bound to $0.3125n + O(1)$ color changes. Our work demonstrates how SAT-based methods can yield not only finite-case confirmations but also asymptotic progress on combinatorial conjectures.
\end{abstract}

\section{Introduction}

% \todo[inline]{Update template}
% Bernardo: done :)

%\todo{SAT solvers are great and used for math problems. Also for this using propagators. Finite answers useful for infinite values (limits)}

While solvers for propositional satisfiability (SAT) have undergone massive improvements in the past 25 years~\citep{FichteLeberreHecherSzeider23},
it is not obvious how to best leverage their computational power for advancing mathematical progress.
Back in 1948, the great logician Alfred Tarski wrote regarding a geometry problem that
% \begin{raggedright}
\emph{``the machine would permit us to test a hypothesis for any special value of $n$. We could carry out such tests for a sequence of consecutive values $n=2,3,..$ up to, say, $n=100$. If the result of at least one test were negative, the hypothesis would prove to be false; otherwise, our confidence in the hypothesis would increase, and we should feel encouraged to attempt establishing the hypothesis''}~\citep{R109}. 
% \end{raggedright}
SAT solvers have largely delivered on Tarski's ideal, allowing e.g., the resolution of the Boolean Pythagorean triples problem~\citep{HeuleKullmannMarek16}, which required examining colorings of the positive integers up to $n = 7825$, or similarly, the resolution of Keller's conjecture on dimension $8$ via SAT which required discarding cliques of size $128$ on graphs of up to $7$ million vertices~\citep{brakensiekResolutionKellersConjecture2020}. Even closer to the example that motivated Tarski, by using a clever SAT encoding, \citet{heuleHappyEndingEmpty2024} proved that every $30$ points in the plane, without three of them collinear, must contain an empty convex hexagon.

While the list of contributions of SAT to different areas of mathematics, especially discrete, keeps growing (e.g., the website \href{https://sat4math.com}{sat4math.com} compiles roughly a hundred papers on the topic), there are remarkably few examples in which SAT solving allows for improving asymptotic bounds~\citep{kulikovImprovingCircuitSize2018, capSets}.

% Thus far, the most common usage is to verify conjectures for small values of $n$, or find small counterexamples.
% We are interested in the use of SAT solvers for mathematics and in particular combinatorial problems.

% SAT solvers are a natural choice for solving finite problems: they can solve Sudoku or find small counterexamples to false conjectures.
% Occasionally, conjectures over seemingly infinite, or even continuous domains (e.g., Euclidean geometry), can be reduced to finite computation and proved via SAT; examples include the resolution of the Pythagorean triples conjecture~\citep{HeuleKullmannMarek16}, the determination of the packing chromatic number of the infinite grid~\citep{MartinRaimondiChenMartin17,SubercaseauxHeule23}, or the fact that every 30 points in general position must contain an empty hexagon~\citep{heuleHappyEndingEmpty2024}.

%\todo[inline]{Write contributions more like bullet points so it is clear for the reader + fact that in contrats to most SAT based contributions to combinatorics we go beyond finite cases. (Also in abstract) }

In this paper, we take the usage of SAT for mathematics further,  focusing on a graph theory conjecture of~\citet{sfuEdgeantipodalColorings} concerning monochromatic paths in edge colorings of the $n$-dimensional hypercube.
We obtain classical-style finite results, but we also go beyond what would normally be expected of a SAT-based method and obtain highly non-trivial asymptotic results.
In order to achieve both, we design novel, and mainly in the second case quite creative, SAT encodings.
Our contributions are twofold:

\begin{enumerate}
	\item
		\label{item:contrib-finite}
		We develop a novel encoding and symmetry breaking that improves on previous encodings and approaches to solving the conjecture.
% by several orders of magnitude.
		With this, we confirm~\Cref{conj:norine-geodesic} (see~\Cref{sec:conjectures}) in the $8$\hy dimensional case, up from the previously best known value~$7$.
		Notably, while~\citet{frankston2024provingnorinesconjectureholds} estimated that solving $n=8$ would take 57.3 CPU years, our (improved) encoding only needed five CPU \emph{days}, for a speedup of more than 3 orders of magnitude ($4000\times$).
		% \todo{check numbers} % Bernardo: this seems right.
	\item
		\label{item:contrib-asymp}
		We obtain improved asymptotic bounds on a quantitative version of the conjecture.
		We achieve this by analyzing a proof of~\citet{dvorakNoteNorinesAntipodalColouring2020}, which uses facts about the $3$\hy dimensional case in order to derive bounds for arbitrary dimension~$n$.
		We extend our SAT encoding from~\Cref{item:contrib-finite} and with it determine the corresponding facts for dimension $6$, which allows us to improve the conclusion of \citeauthor{dvorakNoteNorinesAntipodalColouring2020}'s argument.
		This is a powerful hybrid general theorem-proving strategy: the high-level proof structure is human-designed and can use relatively advanced mathematics (induction, probabilistic reasoning, etc.), but the required \emph{case analysis} (which \citeauthor{dvorakNoteNorinesAntipodalColouring2020} had to verify by hand for $n=3$) is done automatically by the SAT solver.
		The case distinction we need to handle involves both counting and averaging values, thereby leading to a rather creative use of SAT not typically seen in applications.
\end{enumerate}
% \todo[inline]{our analysis is rougher than Dvorak's, but thanks to that it scales higher: tradeoff between human effort and automated deduction}

We see this work as a qualitative leap compared even to SAT\hy based resolution of seemingly infinite conjectures.
In those previous cases it so happened that the apparent infinitude could be compressed into a finite statement.
In our case, we are using finite bits of information as plug-in into a proof that covers an infinite number of cases.

In~\Cref{sec:conjectures}, we explain the conjectures that we are addressing and our contributions in detail.
Afterwards, we discuss the finite encoding and other automated-reasoning techniques used in the computation (\Cref{sec:enc}).  Then,~\Cref{sec:asymptotics} presents the asymptotic approach, and~\Cref{sec:experiments} details computational experiments. We conclude and discuss future work in~\Cref{sec:conclusion}.
Our code is publicly available at~\url{https://github.com/bsubercaseaux/norine}.

\subsection{Conjectures and Contributions}
\label{sec:conjectures}

The hypercube graph of order $n$  denoted $Q_n$, has vertex set $\{0, 1\}^n$ and undirected edges between vertices that differ on exactly one coordinate. We will assume throughout that $n \geq 2$.
For a vertex $v = (v_1, \ldots, v_n)$, its \emph{antipodal vertex} is $\antipodal{v} = (1-v_1, \ldots, 1-v_n)$. That is, $\bar{v}$ is the unique vertex at distance $n$ from $v$.
An \emph{antipodal path} is a path between a vertex $v$ and its antipodal $\bar{v}$.
A \emph{geodesic} on $Q_n$ is a shortest path between two vertices.
%or equivalently, a path which traverses each coordinate direction at most once.
An \emph{antipodal geodesic} is an antipodal path of length $n$.
The \emph{antipodal edge} of an edge $\{u, v\}$ is simply $\{\bar{u}, \bar{v}\},$ and thus a $2$-coloring of the edges of $Q_n$ is said to be antipodal if every edge gets a different color from its antipodal edge.

% The main conjecture  we deal with is 
% \todo{mark antipodal red geodesic in Fig 1}
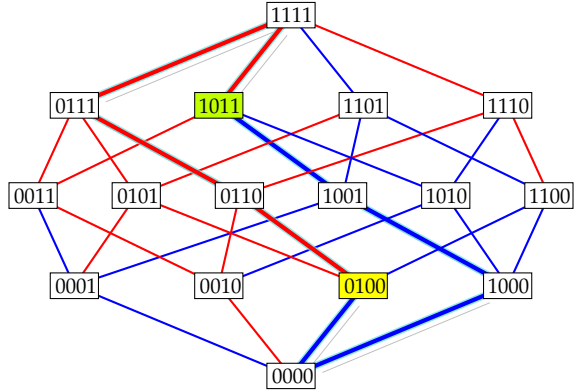
\begin{figure}
    \centering
    \begin{tikzpicture}[scale=0.6]
     \tikzset{every node/.style={  text width=1.3em, inner sep=2pt, text centered, font=\scriptsize}}
\tikzset{
  halo/.style={
    % blur shadow={shadow xshift=0.1pt,
    %        shadow yshift=0pt,
    %        shadow blur steps=12,
    %        shadow blur radius=1.4pt,}
    preaction={
      draw=lightgray!50!cyan,% or lightgray
      line width=2.7pt,
      opacity=0.7,
      drop shadow,
    },
    %  postaction={
    %     blur shadow={
    %       shadow blur steps=5,   % how many blur iterations 
    %       shadow blur radius=3pt, % how “wide” the blur spreads
    %       shadow opacity=0.8      % overall opacity of the blur
    %     }
    % }
  }
}

% \tikzset{
%     patch/.style={
%     preaction={
%       decorate,
%       decoration={coil},
%       draw=white!40!cyan,
%       line width=1pt,
%       opacity=0.8
%     },
%   }
% }

\tikzset{
ee/.style={
        shorten <=-5pt,
        shorten >=-5pt,
    }
}

\tikzset{
shadowed/.style={preaction={transform canvas={shift={(0.8pt,-0.5pt)}},draw=gray,very thick, opacity=.3}}
}

\node[fill=white, draw] (0000) at (8.0,0) {0000};
\node[fill=white, draw] (0001) at (3.2,2) {0001};
\node[fill=white, draw] (0010) at (6.4,2) {0010};
\node[fill=yellow, draw] (0100) at (9.6,2) {0100};
\node[fill=white, draw] (1000) at (12.8,2) {1000};
\node[fill=white, draw] (0011) at (2.2857142857142856,4) {0011};
\node[fill=white, draw] (0101) at (4.571428571428571,4) {0101};
\node[fill=white, draw] (0110) at (6.857142857142857,4) {0110};
\node[fill=white, draw] (1001) at (9.142857142857142,4) {1001};
\node[fill=white, draw] (1010) at (11.428571428571429,4) {1010};
\node[fill=white, draw] (1100) at (13.714285714285714,4) {1100};
\node[fill=white, draw] (0111) at (3.2,6) {0111};
\node[fill=lime, draw] (1011) at (6.4,6) {1011};
\node[fill=white, draw] (1101) at (9.6,6) {1101};
\node[fill=white, draw] (1110) at (12.8,6) {1110};
\node[fill=white, draw] (1111) at (8.0,8) {1111};
\begin{pgfonlayer}{background}
\draw[opacity=1,  thick, blue, ee] (0000) -- (0001);
\draw[opacity=1,  thick, red, ee] (0000) -- (0010);
% \draw[opacity=1, ultra thick, blue, ee,  drop shadow] (0000) -- (0100);
\path[draw, opacity=1, ultra thick, blue,  ee,  halo] (0000) -- (0100);
\path[draw, opacity=1,  ultra thick, blue, ee,  halo] (0000) -- (1000);
\path[draw, opacity=1,  ultra thick, blue, ee, halo] (1001) -- (1011);
\path[draw, opacity=1,  ultra thick, blue, ee, halo] (1000) -- (1001);
\draw[opacity=1,  thick, blue, ee] (0001) -- (0011);
\draw[opacity=1,  thick, red, ee] (0001) -- (0101);
\draw[opacity=1,  thick, blue, ee] (0001) -- (1001);
\draw[opacity=1,  thick, red, ee] (0010) -- (0011);
\draw[opacity=1,  thick, red, ee] (0010) -- (0110);
\draw[opacity=1,  thick, blue, ee] (0010) -- (1010);
\draw[opacity=1,  thick, red, ee] (0100) -- (0101);
\draw[opacity=1,  ultra thick, red, halo, ee] (0100) -- (0110);
\draw[opacity=1,  thick, blue, ee] (0100) -- (1100);

\draw[opacity=1,  thick, blue, ee] (1000) -- (1010);
\draw[opacity=1,  thick, blue, ee] (1000) -- (1100);
\draw[opacity=1,  thick, red, ee] (0011) -- (0111);
\draw[opacity=1,  thick, red, ee] (0011) -- (1011);
\draw[opacity=1,  thick, red, ee] (0101) -- (0111);
\draw[opacity=1,  thick, red, ee] (0101) -- (1101);

\draw[opacity=1,  thick, blue, ee] (1001) -- (1101);
\draw[opacity=1,  ultra thick, red, halo, ee] (0110) -- (0111);
\draw[opacity=1,  thick, red, ee] (0110) -- (1110);
\draw[opacity=1,  thick, blue, ee] (1010) -- (1011);
\draw[opacity=1,  thick, blue, ee] (1010) -- (1110);
\draw[opacity=1,  thick, blue, ee] (1100) -- (1101);
\draw[opacity=1,  thick, red, ee] (1100) -- (1110);
\draw[opacity=1,  ultra thick, red, halo, ee] (0111) -- (1111);
\draw[opacity=1,  ultra thick, red, halo, ee] (1011) -- (1111);
\draw[opacity=1,  thick, blue, ee] (1101) -- (1111);
\draw[opacity=1,  thick, red, ee] (1110) -- (1111);
\end{pgfonlayer}
    \end{tikzpicture}
    \caption{
		An antipodal coloring of $Q_4$, with monochromatic geodesics between the antipodal vertices $(1,0,1,1)$ and $(0, 1, 0, 0)$ highlighted.
	}
    \label{fig:intro_example}
\end{figure}

\begin{conjecture}[\citef{sfuEdgeantipodalColorings}]\label{conj:norine}
    Any antipodal $2$-coloring of the edges of $Q_n$ has a monochromatic antipodal path.
\end{conjecture}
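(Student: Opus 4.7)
The plan is to attack \Cref{conj:norine} on two complementary fronts, reflecting the dual nature of progress that has been made on it so far. Because the conjecture has resisted resolution since~\citet{sfuEdgeantipodalColorings} posed it, the realistic goal is not a full proof --- which appears to demand substantively new ideas --- but rather to push the finite verification to the next unresolved dimension and to sharpen the best known asymptotic bound, each via a tailored SAT formulation.

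On the finite front, I would encode the \emph{negation} of \Cref{conj:norine} for a fixed dimension $n$: Boolean variables for the color of each edge, clauses enforcing that each edge receives a color opposite its antipodal partner, and clauses forbidding any monochromatic antipodal path between each pair $(v,\antipodal{v})$. Naive reachability encodings blow up quickly, so compactness is critical: I would use a layered BFS-style encoding per color and per source vertex, and aggressively break symmetries induced by the automorphism group of $Q_n$ together with the color-swap involution via a lex-leader scheme. Partitioning with cube-and-conquer should then make parallel search tractable enough to certify unsatisfiability for $n=8$.

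On the asymptotic front, I would follow the hybrid strategy of~\citet{dvorakNoteNorinesAntipodalColouring2020}: his induction on $n$ reduces the problem to a quantitative statement about antipodal colorings of a constant-dimensional subcube, which he verified by hand for $n=3$. My plan is to identify the analogous case analysis at a larger base dimension --- $n=6$ --- and reformulate it as a SAT instance. Because the base case requires reasoning not merely about existence of paths but about averages of color-change counts across many antipodal geodesics, the encoding must express cardinality and summation constraints alongside the coloring, and this is where the novelty of the SAT formulation lies. A certified improved base value then propagates mechanically through Dvořák's recursion and yields a strictly smaller constant in the asymptotic bound.

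The principal obstacle is the \emph{asymptotic-to-exact} gap: no amount of finite case analysis plugged into Dvořák's recursion can drive the number of color changes below $\Omega(n)$, whereas the conjecture itself demands $0$. Bridging that final gap seems to require a global structural argument of a kind that current SAT technology cannot itself supply. What the proposed approach can deliver is a demonstration that SAT-based case analysis, embedded inside a human-designed inductive skeleton, produces not only finite-case confirmations but also concrete asymptotic improvements, shrinking the gap as far as the reduction skeleton permits.
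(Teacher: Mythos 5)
Your proposal is not (and does not claim to be) a proof of \Cref{conj:norine} --- which remains open and is not proved in the paper either --- but as a description of the attack it mirrors the paper's approach essentially point for point: a compact reachability-style SAT encoding with lex-leader symmetry breaking and cube-and-conquer to certify the finite cases through $n=8$, plus SAT-computed base-case data for $n=6$ plugged into Dvořák's probabilistic chunking argument to improve the asymptotic color-change bound, together with the (correct) acknowledgment that this framework cannot by itself close the gap to zero color changes. Two small imprecisions worth noting: Dvořák's argument is a direct probabilistic partition of a random geodesic into constant-length chunks rather than an induction on $n$, and the colorings induced on the $Q_k$ subcubes are arbitrary, not antipodal, since the relevant quantitative relaxation (\Cref{conj:norine-allcolorings}) concerns arbitrary $2$-colorings.
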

An example is presented in~\Cref{fig:intro_example}.
The cases $n \leq 5$ were covered by~\citet{federHypercubeLabellingsAntipodal2013}, and the case $n = 6$ was proved first via SAT by~\citet{zulkoskiCombiningSATSolvers2017a}, and then manually by~\citet{West_Wise_2019}.
Recently Frankston and Scheinerman proved the case $n = 7$, also via SAT.
The two previous SAT approaches in fact proved a stronger variant of the conjecture:

\begin{conjecture}[Geodesic Norin]\label{conj:norine-geodesic}
    Any antipodal $2$-coloring of the edges of $Q_n$ has a monochromatic antipodal geodesic.
\end{conjecture}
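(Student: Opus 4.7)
The plan is to verify \Cref{conj:norine-geodesic} for specific small dimensions via SAT, since for each fixed $n$ the statement reduces to checking unsatisfiability of a finite propositional formula; the general conjecture remains open.

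The starting point is a direct SAT encoding. Introduce a Boolean variable $x_e$ for each edge $e$ of $Q_n$, interpreting its truth value as the color. The antipodal coloring condition is enforced by constraints $x_e \neq x_{\bar e}$ for each antipodal pair; identifying $x_{\bar e}$ with $\lnot x_e$ collapses each such pair to a single variable, leaving roughly $n \cdot 2^{n-2}$ free variables. For each antipodal geodesic $P$ --- determined by a starting vertex $v$ together with a permutation $\pi \in S_n$ dictating the order in which coordinates are flipped --- I would add one clause forbidding all $n$ edges of $P$ from being simultaneously $0$ and a symmetric one forbidding them all from being $1$. Unsatisfiability of the resulting formula then witnesses \Cref{conj:norine-geodesic} for that dimension.

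To push the reachable $n$ from the known value $7$ up to $8$, I would combine three ideas. First, symmetry breaking that exploits the hyperoctahedral automorphism group of $Q_n$ (of order $2^n \cdot n!$) together with the color-swap symmetry: a natural static scheme fixes the colors of edges incident to $0^n$ and imposes a lex-leader ordering on the remaining orbit of assignments. Second, cube-and-conquer parallelism, splitting on the colors of a small seed set of edges and distributing the resulting subproblems across CPUs. Third, a careful ordering of decision variables so that unit propagation on the antipodal constraints fires early in the search.

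The main obstacle I anticipate is the explosion in path constraints: $Q_8$ carries $2^{n-1} n! \approx 5 \cdot 10^6$ antipodal geodesics, giving on the order of $10^7$ path-blocking clauses in the naive encoding, which taxes both memory and propagation speed. The hard step is thus to design a substantially more compact encoding --- presumably by introducing auxiliary variables that track whether a \emph{partial} path is still a candidate monochromatic geodesic, so that geodesics sharing prefixes share clauses rather than being listed independently. Without such compression the estimated $57.3$ CPU-year runtime reported by \citeauthor{frankston2024provingnorinesconjectureholds} appears out of reach, so the novelty must lie in squeezing both the clause count and the symmetry-breaking overhead down by several orders of magnitude.
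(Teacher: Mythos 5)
Your proposal captures the paper's actual approach: the naive per-geodesic encoding indeed blows up to $2^{n-1}n!$ clauses, and the paper's remedy is exactly the one you gesture at, namely auxiliary variables $p_{u,v}$ meaning \emph{there is a red path (or geodesic) from $u$ to $v$}, with short propagation clauses $r_{u,v}\to p_{u,v}$ and $(p_{u,v}\wedge r_{v,w})\to p_{u,w}$, cutting the clause count to roughly $2^{2n-1}n$. Together with collapsing antipodal variable pairs, lex-leader symmetry breaking over a restricted (transposition plus one-flip) subset of the hyperoctahedral group, and cube-and-conquer, this is essentially the paper's method.
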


% \todo[inline]{Make clear that this conjectures are not from us}
\citet{federHypercubeLabellingsAntipodal2013} posed the following variant:

\begin{conjecture}[\citef{federHypercubeLabellingsAntipodal2013}]\label{conj:norine-allcolorings}
    Any $2$-coloring of the edges of $Q_n$ has an antipodal path $P$ with at most one \emph{``color change.''} That is, $P$ is the concatenation of two monochromatic paths.
\end{conjecture}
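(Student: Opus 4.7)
The plan is to attack Feder's variant with the same two-pronged hybrid strategy used for Norin's conjecture in this paper: finite verification for small $n$ via SAT, coupled with an inductive reduction for the general case. For the finite part I would modify the encoding from~\Cref{sec:enc} in two ways: drop the antipodality constraint on the edge coloring, and replace the search for a monochromatic antipodal path by a search for an antipodal path that splits into two monochromatic segments at some ``switch vertex'' $w$. Concretely, for every candidate pair $(v, \bar v)$ and every $w$ on a geodesic from $v$ to $\bar v$, I would introduce auxiliary variables encoding a red $v \to w$ path and a blue $w \to \bar v$ path (and vice versa), and search for a coloring that avoids all such paths. The symmetry-breaking and cube-and-conquer techniques developed here should transfer cleanly, and I would expect to reach at least the same values of $n$ as for Norin's conjecture, providing confidence in the conjecture and identifying structural features of worst-case colorings.

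For arbitrary $n$, I would attempt an induction on $n$. Given a $2$-coloring of $Q_n$, pick a coordinate $i \in [n]$ and split $Q_n$ into two $(n-1)$-dimensional faces $F_0, F_1$ connected by the $i$-th perfect matching $M_i$. By the inductive hypothesis, each $F_j$ contains an antipodal path $P_j$ (between some $v_j$ and its antipode inside $F_j$) with at most one color change. The hope is that, averaging over the $n$ choices of $i$ and the two faces, one of the short paths $P_j$ can be extended across a single edge of $M_i$ into a full antipodal path in $Q_n$ without creating a second color change. To make this work I would probably need to strengthen the induction hypothesis, carrying along extra combinatorial data such as the colors of the first and last edges of $P_j$, or the parity of color changes at designated vertices, so that compatible pairs $P_0,P_1$ can be guaranteed for at least one direction $i$.

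The main obstacle, almost surely, is the inductive step. The existing asymptotic results---Dvořák's $0.375n + o(n)$ bound and this paper's $0.3125n + O(1)$ bound---both allow the number of color changes to grow linearly in $n$, so collapsing this all the way to the constant $1$ requires a substantially new structural insight, and a direct averaging argument over geodesics is already known to be too lossy. Two speculative angles worth pursuing are: (i) a reduction to Norin's conjecture via a symmetrization construction on a doubled hypercube $Q_{n+1}$, in which an arbitrary coloring of $Q_n$ is lifted to an antipodal coloring of $Q_{n+1}$ in such a way that monochromatic antipodal paths in the lift project to at-most-one-switch paths below; and (ii) a SAT-assisted structural lemma in the spirit of~\Cref{sec:asymptotics}, where the solver certifies a constant-dimensional ``color-change elimination'' statement that, when combined with induction, lets one peel away color changes one at a time. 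Both routes are speculative, but they mirror the successful finite-plus-structural template of the present paper and are, in my view, the most natural places to concentrate the creative work.
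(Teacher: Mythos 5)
\Cref{conj:norine-allcolorings} is, as its name indicates, a \emph{conjecture}: the paper does not prove it for general $n$, and there is therefore no ``paper's own proof'' to compare your proposal against. What the paper does prove (\Cref{thm:verification_8}) is the finite fact that \Cref{conj:norine-allcolorings} holds for $n \leq 7$, via the SAT encoding of \Cref{sec:fprime} (adding the unit clauses $\lnot p^{x}_{u,\bar u,1}$); and it proves the quantitative relaxation \Cref{thm:asymptotic}, which allows $0.3125n + O(1)$ color changes rather than one. Your proposal honestly acknowledges that it is speculative, and indeed it does not constitute a proof of the conjecture; what you are offering is a research plan. Your first paragraph (drop antipodality, allow one switch vertex, reuse the symmetry breaking and cube-and-conquer machinery) is essentially the route the paper takes for its finite verification, so that part is a correct reconstruction of what was actually done.

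The inductive step in your second paragraph has a concrete gap. If you split $Q_n$ by coordinate $i$ into two $(n-1)$-dimensional faces $F_0, F_1$ and invoke the hypothesis inside $F_j$, you obtain a low-change path between a vertex $v_j$ and its antipode \emph{within $F_j$}, which is the vertex agreeing with $v_j$ in coordinate $i$ and disagreeing everywhere else. But the true $Q_n$-antipode of $v_j$ lies in the \emph{other} face. So the path $P_j$ produced by the hypothesis neither starts nor ends at a vertex that is useful for building an antipodal path of $Q_n$; you cannot ``extend across a single matching edge'' because you are not at the right endpoint, and you would instead have to traverse $M_i$ mid-path and splice two sub-antipodal paths, which generically introduces at least one extra color change per splice. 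This is precisely why the best known bounds are linear in $n$: repeating any such local patching $\Theta(n)$ times loses $\Theta(n)$ changes, and no averaging over the $n$ choices of $i$ is known to overcome it. Your idea (i) of lifting an arbitrary coloring of $Q_n$ to an antipodal coloring of $Q_{n+1}$ is in the right spirit --- this is essentially the mechanism behind the known equivalence (Leader--Long, cited in the paper) showing \Cref{conj:norine-geodesic} for $n+1$ implies \Cref{conj:norine-allcolorings-geodesic} for $n$ --- but observe that it reduces Feder's conjecture \emph{to} Norin's geodesic conjecture for $n+1$, which is itself open, so this is a conditional reduction rather than a proof.

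In short: the statement is open, the paper proves only the $n \le 7$ cases and a linear-in-$n$ relaxation, your finite-verification plan matches the paper's, and your proposed inductive step fails because face-antipodes are not hypercube-antipodes.
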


Note that~\Cref{conj:norine-allcolorings} is not restricted to antipodal colorings and~\Cref{conj:norine-allcolorings} implies~\Cref{conj:norine}: if $P_1$ and  $P_2$ are monochromatic paths then $\overline{P_2}P_1$ is a monochromatic path in an antipodal coloring, where $\overline{P_2}$ is the antipodal path~\citep{dvorakNoteNorinesAntipodalColouring2020}. 
Finally, the following variant was considered by~\citet{LEADER201429}.

\begin{conjecture}\label{conj:norine-allcolorings-geodesic}
    Any $2$-coloring of the edges of $Q_n$ has an antipodal geodesic $P$ with at most one color change.
\end{conjecture}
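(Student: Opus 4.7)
The plan is to follow this paper's two-pronged strategy: first push the SAT-based verification of small cases as far as current hardware permits, then use those finite certificates as plug-ins for an inductive argument giving either the full conjecture or a quantitative approximation of it. Since \Cref{conj:norine-allcolorings-geodesic} strengthens \Cref{conj:norine,conj:norine-geodesic,conj:norine-allcolorings} simultaneously, and since even \Cref{conj:norine-geodesic} is currently open for $n \geq 9$, a direct combinatorial attack for all $n$ is unlikely to succeed; the realistic first goal is to verify a new dimension and to sharpen the asymptotic count of color changes required along a geodesic.

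First, I would adapt the finite encoding of \Cref{sec:enc} in two ways: (i) drop the antipodality clauses on the edge-color variables so that arbitrary $2$-colorings are enumerated, and (ii) enrich the path variables with a single \emph{color-switch} indicator per edge of the sought geodesic, constrained to sum to at most one along the path. Because the symmetry group of an arbitrary $2$-coloring is the full hyperoctahedral group rather than its antipodality-preserving subgroup, more aggressive lex-leader symmetry breaking becomes available, which should partly offset the growth of the search space caused by removing the antipodal restriction.

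Second, I would attempt a Dvořák-style induction on the dimension: split $Q_n$ into two copies of $Q_{n-1}$ along a coordinate, apply an inductive quantitative hypothesis to each half, and stitch the two sub-geodesics across a carefully chosen crossing edge. The aim is to show that every $2$-coloring of $Q_n$ admits an antipodal geodesic with at most $c_n$ color changes, with $c_n = O(1)$ and conjecturally $c_n \leq 1$. As in \Cref{sec:asymptotics}, the base of the induction would be supplied by the SAT solver: one encodes, for some fixed small dimension, a statement about the distribution of color changes over all geodesics through each antipodal pair, and feeds the resulting constants into the inductive step.

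The hard part will be the rigidity of geodesics: unlike in \Cref{conj:norine-allcolorings}, one cannot detour around a badly colored sub-cube, so every color change inside a sub-problem transfers directly to the stitched path. In particular, a single worst-case count per sub-problem is almost certainly insufficient; the SAT-derived base case must carry averaged or tail information over many geodesics between each antipodal pair, in the spirit of the creative encoding used here for the asymptotic improvement. Designing such an encoding that is both expressive enough to sustain the inductive step and tractable for the solver is likely where most of the effort will have to go.
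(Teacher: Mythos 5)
This statement is an open conjecture; the paper does not prove it, and neither does your proposal claim to. What the paper actually establishes is (i) a verification of~\Cref{conj:norine-allcolorings-geodesic} for $n \leq 7$, obtained indirectly via~\citet[Prop.~4.6]{LEADER201429}: verifying~\Cref{conj:norine-geodesic} for $n+1$ by SAT yields~\Cref{conj:norine-allcolorings-geodesic} for $n$; and (ii) a quantitative relaxation (\Cref{thm:asymptotic}) giving an antipodal geodesic with at most $0.3125n + 6$ color changes, still linear in $n$. You describe the same two-pronged strategy at a high level, so your framing is sound, but two specific ingredients of your plan have problems.

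First, your hope that a Dvořák-style induction with SAT-computed base cases can give $c_n = O(1)$, let alone $c_n \leq 1$, is ruled out for the kind of argument you describe. The paper's \Cref{appendix:alternating} exhibits the alternating coloring $c_n$ for which the \emph{average} number of color changes over all antipodal geodesics is $\Omega(\sqrt{n})$; hence any approach that bounds the expected (or worst-case-averaged) color changes, as the $f$- and $\fprime$-based analysis does, can at best establish $O(\sqrt{n})$, not $O(1)$. To get $O(1)$ one would need an argument that can \emph{select} geodesics rather than average over them, which the current framework does not do.

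Second, your proposed decomposition $Q_n = Q_{n-1} \times \{0,1\}$ with stitching across one crossing edge differs from the paper's decomposition, which follows Dvořák in partitioning a uniformly random antipodal geodesic into $\lfloor n/k \rfloor$ consecutive chunks, each living in a $Q_k$-subcube, and then locally re-routing each chunk. The split-in-half version has a real obstruction that you partly acknowledge but do not resolve: after splitting, the path inside each half is a geodesic from a vertex to some \emph{non-antipodal} intermediate vertex, so the inductive hypothesis (which concerns antipodal geodesics) does not apply, and you cannot choose the crossing vertex in both halves simultaneously without correlation. Dvořák's chunking sidesteps this precisely because the expected-cost analysis is uniform over all start vertices of each $Q_k$-block, which is what the quantities $f(k)$ and $\fprime(k)$ encode; an inductive hypothesis strong enough for your split would need to carry a full distribution over intermediate vertices, which is much closer to the chunked formulation than to a clean induction. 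If you intend to go this route, it would be more direct to adopt the chunked random-geodesic argument and push $k$ beyond $6$, while accepting the $\Omega(\sqrt{n})$ barrier as the limit of what this family of methods can prove.

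Finally, a minor point: the paper already sketches the direct SAT encoding you propose for~\Cref{conj:norine-allcolorings-geodesic} — adding unit clauses $\lnot p^{x}_{u,\bar{u},1}$ to the $\fprime$-encoding — but in practice it verifies the conjecture for $n=7$ via the Leader implication and the cheaper antipodal-coloring formula $\Psi_8$. Your plan to drop the antipodality constraint enlarges the search space and, contrary to your expectation, does not enlarge the usable symmetry group: the hyperoctahedral group $\mathcal{H}_n$ already acts on arbitrary colorings, and the antipodal restriction only removes degrees of freedom without shrinking $\mathcal{H}_n$. So the indirect route is likely to remain the more efficient one.
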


As with the relationship between~\Cref{conj:norine-allcolorings} and ~\Cref{conj:norine}, \Cref{conj:norine-allcolorings-geodesic} implies~\Cref{conj:norine-geodesic}. In this case, the converse also holds~\citep[Proposition 4.6]{LEADER201429}. More precisely, if~\Cref{conj:norine-geodesic} holds for $n+1$, then~\Cref{conj:norine-allcolorings-geodesic} follows for $n$.
\Cref{fig:conjectures} summarizes the relation between the different conjectures.

% \begin{figure}
%     \centering
%     \includegraphics[width=1\linewidth]{AuthorKit25/conjectures_fancy.png}
%     \caption{Overview of conjectures and their implications.}
%     \label{fig:conjectures}
% \end{figure}

\begin{figure}
    \centering
    \tikzset{ 
  colorful steps2/.style={ 
    path picture={
       \foreach \c [count=\i from 0] in {#1} { 
          \fill[\c, opacity=0.8] let \p1=($(path picture bounding box.east)-(path picture bounding box.west)$) in ([xshift={(1/2)*\i*\x1}]path picture bounding box.south west) rectangle (path picture bounding box.north east); 
      } 
    } 
  } 
}

\tikzset{ 
  colorful steps9/.style={ 
    path picture={
       \foreach \c [count=\i from 0] in {#1} { 
          \fill[\c, opacity=0.2] let \p1=($(path picture bounding box.east)-(path picture bounding box.west)$) in ([xshift={(1/7)*\i*\x1}]path picture bounding box.south west) rectangle (path picture bounding box.north east); 
      } 
    } 
  } 
}

\begin{tikzpicture}

\def\halfDist{0.5}

\node[draw, fill=white, align=center] (1) at (0,0)  {\textbf{Conjecture 1} \\ \\ {\footnotesize \emph{Coloring}: antipodal} \\ {\footnotesize \emph{Path}: arbitrary, monochromatic}};

\node[draw, fill = white, circle, inner sep=1pt] (a) at (-1.5,0.2) {$u$};
\node[draw, fill= black, text=white, circle, inner sep=1pt] (b) at (1.5,0.2) {$\bar{u}$};

\draw[thick,color=red,decorate,decoration={random steps, segment length=4pt, amplitude=2pt},line cap=round] (a) -- (0.0,0.3) -- (0.2, -0.1) -- (-0.2, 0.15) -- (0.4, 0.4) -- (b);

% \node[draw,  align=center, minimum width=4.0cm, minimum height=2cm] (2) at (5.5, 0) {};
\node[draw, fill=white, align=center] (2) at (5.5+\halfDist, 0) {\textbf{Conjecture 2} \\  \\ {\footnotesize  \emph{Coloring}: antipodal} \\ {\footnotesize  \emph{Path}: geodesic, monochromatic}};

\node[draw, fill = white, circle, inner sep=1pt] (a2) at (4.0+\halfDist,0.2) {$u$};
\node[draw,  fill= black, text=white, circle, inner sep=1pt] (b2) at (7.0+\halfDist,0.2) {$\bar{u}$};

\draw[thick,color=red,decorate,decoration={random steps, segment length=4pt, amplitude=2pt},line cap=round] (a2) -- (b2);

\node[draw, fill=white, align=center] (3) at (0, -3-\halfDist) {\textbf{Conjecture 3} \\ \\ {\footnotesize  \emph{Coloring}: arbitrary} \\ {\footnotesize  \emph{Path}: arbitrary, $\leq \! 1$ color change}};

\node[draw, fill = white, circle, inner sep=1pt] (a3) at (-1.5,-2.8-\halfDist) {$u$};
\node[draw,  fill= black, text=white, circle, inner sep=1pt] (b3) at (1.5,-2.8-\halfDist) {$\bar{u}$};

\draw[thick,color=red,decorate,decoration={random steps, segment length=4pt, amplitude=2pt},line cap=round] (a3) -- (0.0, 0.3-3.0-\halfDist) -- (0.2, -0.1-3.0-\halfDist);
\draw[thick,color=blue,decorate,decoration={random steps, segment length=4pt, amplitude=2pt},line cap=round] (0.2, -0.1-3.0-\halfDist) -- (-0.2, 0.15-3.0-\halfDist) -- (0.4, 0.4-3.0-\halfDist) -- (b3);

\node[draw, fill=white, align=center] (4) at (5.5+\halfDist, -3-\halfDist) {\textbf{Conjecture 4} \\ \\ {\footnotesize  \emph{Coloring}: arbitrary} \\ {\footnotesize \emph{Path}: geodesic, $\leq \! 1$ color change}};

\node[draw, fill = white, circle, inner sep=1pt] (a4) at (-1.5+5.5+\halfDist,-2.8-\halfDist) {$u$};
\node[draw,  fill= black, text=white, circle, inner sep=1pt] (b4) at (1.5+5.5+\halfDist,-2.8-\halfDist) {$\bar{u}$};

\draw[thick,color=blue,decorate,decoration={random steps, segment length=4pt, amplitude=2pt},line cap=round] (a4) -- (5.5, -2.8-\halfDist);
\draw[thick,color=red,decorate,decoration={random steps, segment length=4pt, amplitude=2pt},line cap=round] (5.5, -2.8-\halfDist) -- (b4);

\draw[very thick, black, -latex, double]  (2) -- (1);

\draw[very thick, black, -latex, double]  (3) -- (1);

\draw[very thick, black, latex-latex, double]  (2) -- (4);

\node[align=center] (label) at (6.95+3.2*\halfDist, -1.5-0.5*\halfDist) {

\footnotesize \citet[Prop. 4.6]{LEADER201429} };

\node[] (label2) at (2.75+0.6*\halfDist, 0.25) {\footnotesize trivial };

\node[] (label3) at (2.75+0.6*\halfDist, -2.75-\halfDist) {\footnotesize trivial };

\node[align=center] (label4) at (-1.2-2*\halfDist, -1.5-0.5*\halfDist) {\footnotesize  
\citet[Thm. 5]{federHypercubeLabellingsAntipodal2013}
};

\draw[very thick, black, -latex, double]  (4) -- (3);

% \draw[dashed, blue, -] (2, 0.5) -- (2, -3.5);

\end{tikzpicture}
    \caption{Overview of conjectures and their implications. Recall that~\Cref{conj:norine-geodesic} holding for $n$ implies~\Cref{conj:norine-allcolorings-geodesic} for $n-1$.}
    \label{fig:conjectures}
\end{figure}

We can now formally state our main results.

\begin{theorem}\label{thm:verification_8}
    \Cref{conj:norine} and~ \Cref{conj:norine-geodesic} hold for $n=8$.  \Cref{conj:norine-allcolorings}, and  \Cref{conj:norine-allcolorings-geodesic} hold for $n = 7$.
\end{theorem}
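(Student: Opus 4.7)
The plan is to reduce the entire theorem to a single SAT verification, namely \Cref{conj:norine-geodesic} (Geodesic Norin) for $n = 8$, and then invoke the implications summarized in \Cref{fig:conjectures} to obtain the remaining three statements for free. Concretely, once Geodesic Norin is established for $n=8$, the trivial implication that a geodesic is a path immediately yields \Cref{conj:norine} for $n=8$. Moreover, by \citet[Proposition 4.6]{LEADER201429}, Geodesic Norin at dimension $n+1$ implies \Cref{conj:norine-allcolorings-geodesic} at dimension $n$, so $n=8$ gives us \Cref{conj:norine-allcolorings-geodesic} for $n=7$; and that trivially implies \Cref{conj:norine-allcolorings} for $n=7$ (dropping the geodesic constraint). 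Thus the whole theorem collapses to a single computational obligation.

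To discharge that obligation, I would build a propositional formula $\Phi_8$ whose satisfying assignments are in bijection with counterexamples to \Cref{conj:norine-geodesic} on $Q_8$. The variables are one Boolean $x_e$ per edge $e$ of $Q_8$ (encoding its color). The constraints are (i) \emph{antipodality}: for every edge $e$, the clauses $x_e \neq x_{\bar e}$; and (ii) \emph{no monochromatic antipodal geodesic}: for every antipodal pair $\{v, \bar v\}$ and every length-$n$ geodesic $P$ from $v$ to $\bar v$, add a clause forbidding all edges of $P$ from receiving the same color. Proving the theorem then amounts to showing $\Phi_8$ is unsatisfiable. The raw instance is enormous ($Q_8$ has $1024$ edges and $8! = 40320$ geodesics per antipodal pair, giving roughly $128 \cdot 40320 = 5{,}160{,}960$ geodesic clauses), so the central engineering work is to produce a compact encoding and then to prune the search space.

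I expect the main obstacle to be taming the search space so that it is tractable, since a naive encoding is known to be far out of reach (the $57$ CPU-year estimate cited from Frankston--Scheinerman). The plan is therefore to (a) employ the compact encoding alluded to in the introduction, which avoids explicit enumeration of all $n!$ geodesics per antipodal pair; (b) exploit the automorphism group of $Q_n$ (the hyperoctahedral group, of order $2^n n!$) via symmetry-breaking predicates that fix a canonical orientation and a lex-leading prefix of the coloring; and (c) split the residual formula using cube-and-conquer, producing a collection of independent subproblems that can be refuted in parallel. Correctness of the symmetry breaking must be argued: any counterexample can be mapped under some automorphism to one satisfying the canonical form, so unsatisfiability of the restricted formula implies unsatisfiability of $\Phi_8$.

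Finally, to make the result trustworthy at this scale, I would have each SAT subproblem emit a DRAT proof, and verify all proofs with a formally checked proof checker. If every cube-and-conquer subproblem is certified UNSAT, we conclude $\Phi_8$ is unsatisfiable, hence \Cref{conj:norine-geodesic} holds for $n=8$, and the full theorem follows by the implications above.
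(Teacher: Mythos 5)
Your reduction to a single SAT obligation—Conjecture~\ref{conj:norine-geodesic} at $n=8$—followed by the implication chain from \Cref{fig:conjectures} (Conjecture~\ref{conj:norine-geodesic} at $n=8 \Rightarrow$ Conjecture~\ref{conj:norine} at $n=8$ trivially; Conjecture~\ref{conj:norine-geodesic} at $n=8 \Rightarrow$ Conjecture~\ref{conj:norine-allcolorings-geodesic} at $n=7$ by Leader--Long Prop.~4.6; Conjecture~\ref{conj:norine-allcolorings-geodesic} at $n=7 \Rightarrow$ Conjecture~\ref{conj:norine-allcolorings} at $n=7$ trivially) is logically sound and is essentially the paper's proof, with the same pipeline of compact encoding, hyperoctahedral lex-leader symmetry breaking, and cube-and-conquer. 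Two small remarks on alignment with the paper: the formula you call $\Phi_8$ encodes geodesics and so corresponds to the paper's $\Psi_8$ (the paper reserves $\Phi_n$ for the Conjecture~\ref{conj:norine} encoding); and your streamlining is actually slightly cleaner than the paper's presentation, since $\Psi_n$'s clause set is a subset of $\Phi_n$'s, so $\Psi_8$ UNSAT yields $\Phi_8$ UNSAT for free, whereas the experimental section reports $\Phi_8$ UNSAT, which alone gives only Conjecture~\ref{conj:norine}. The one thing you explicitly flag but do not fill in is the compact encoding itself, which the paper realizes via path-reachability propagation variables $p_{u,v}$ (Equations~\eqref{eq:constraint1}--\eqref{eq:constraint2}), reducing the clause count from roughly $5$M to roughly $300$K.
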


\begin{theorem}\label{thm:asymptotic}
    For $n \geq 2$, and any 2-coloring of the edges of $Q_n$, there is an antipodal geodesic with at most $0.3125 n + 6$ color changes.
\end{theorem}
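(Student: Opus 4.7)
The plan is to generalize Dvořák's asymptotic argument by replacing its dimension-$3$ base case with a stronger analog for $Q_6$ that I certify via SAT.

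First, I would revisit Dvořák's proof, whose core is a local-to-global averaging. One partitions the $n$ coordinates of $Q_n$ into $\lfloor n/d \rfloor$ blocks of a fixed size $d$ (plus a small remainder), constructs antipodal geodesics of $Q_n$ by concatenating antipodal geodesics across the inherited $Q_d$-subcubes, and averages over the random choices involved (block ordering, per-block geodesic choice, starting vertex). This yields a bound of the form $(c_d/d)\cdot n + O(d)$, where $c_d$ is an optimal per-block average color-change count. Dvořák establishes $c_3/3 \le 3/8$ by hand, giving the $0.375n + o(n)$ bound.

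Second, I would extend the finite encoding from \Cref{sec:enc} to certify $c_6/6 \le 5/16$, i.e., $c_6 \le 15/8$. The key new challenge is that the SAT instance must not only encode colorings of $Q_6$ and antipodal geodesics within them, but express the statement that \emph{every} such coloring admits a valid family of antipodal geodesics whose \emph{average} color-change count is at most $15/8$. This quantitative, averaging character goes beyond the purely existential encoding used for \Cref{thm:verification_8}: the encoding must compare a weighted sum of per-path color changes against a rational threshold, which amounts to propositional arithmetic on top of the path-finding gadgetry.

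Third, I would plug $c_6/6 \le 5/16$ into Dvořák's averaging framework with $d=6$. This yields an antipodal geodesic with at most $(5/16)n + 6$ color changes for any $n\ge 2$; the additive constant $6$ absorbs the boundary term from at most one incomplete block of coordinates, replacing Dvořák's $o(n)$ by an explicit $O(1)$.

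The main obstacle is the SAT encoding in the second step: counting color changes simultaneously across a family of geodesics and comparing their average to a rational threshold is not standard SAT fare, and the encoding must remain compact enough for the solver to refute the existence of a bad coloring among the doubly-exponential space of $2$-colorings of $Q_6$'s $192$ edges within a feasible runtime.
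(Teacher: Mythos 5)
Your high-level plan matches the paper's strategy — partition a random antipodal geodesic into length-$k$ sub-geodesics, SAT-certify a worst-case per-block quantity on $Q_6$, and plug it into an averaging argument — but it leaves unspecified the one technical ingredient that actually makes the approach beat Dvořák, and it is unclear whether what you propose to certify is the right quantity.

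If your ``$c_6$'' is the worst-case average color-change count of geodesics \emph{within} $Q_6$ (what the paper calls $f(6)$), the plug-in fails: $f(6)=1.5$, and the boundary between consecutive blocks can cost another color change, so the naive per-block cost is $f(6)+1 = 2.5$, giving $(2.5/6)n \approx 0.417n$ — \emph{worse} than Dvořák's $0.375n$. The number $15/8$ you aim for is not $f(6)$; it is $\hat{f}(6)+1$, where the paper's refined quantity
\[
\hat{f}(k) := \max_c \frac{1}{2^k}\sum_{u} \min\bigl(s_{c,u,\bar u},\; s'_{c,u,\bar u}-1\bigr),
\qquad s' := \max\bigl(s^{\mathrm{red}}, s^{\mathrm{blue}}\bigr),
\]
encodes the fact that for many starting vertices $u$ one can pick the color of the \emph{first} edge of the sub-geodesic freely without increasing its internal color-change count, and thereby avoid the would-be color change at the seam with the previous block. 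This is the mechanism (generalizing Dvořák's good/bad-coloring case analysis) that brings the per-block cost down from $f(6)+1=2.5$ to $\hat{f}(6)+1=15/8$. Your Step 2 asks the SAT solver to certify ``a family of antipodal geodesics whose average color-change count is at most $15/8$,'' but the within-$Q_6$ average is already $\le 1.5$; what must be certified is the stronger, two-sided property captured by $\hat{f}$, and without naming it the encoding and the accounting in Step 3 don't close.

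A related inaccuracy: Dvořák's $3/8$ does not arise as ``$c_3/3 \le 3/8$'' from a per-block average. Even with the $\hat{f}$ refinement, the paper's Lemma 3.3 at $k=3$ gives only $(\hat{f}(3)+1)/3\cdot n = 0.5n$; Dvořák's $3/8$ needs a finer probabilistic argument over good versus bad $Q_3$ colorings. The whole point of the paper's move to $k=6$ is that the cruder $\hat{f}$-based bound, when instantiated at a larger block size, overtakes Dvořák's more delicate $k=3$ analysis. Once you make the $\hat{f}$ definition explicit, your Step 2 (cardinality-constraint SAT encoding summing $\min(s, s'-1)$ over antipodal pairs and comparing to $2^{k-1}\alpha$) and Step 3 (arithmetic giving $0.3125n + O(1)$) do line up with the paper's proof.
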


\Cref{thm:asymptotic} concerns a quantitative version of~\Cref{conj:norine-allcolorings}, and even though~\Cref{conj:norine-allcolorings} requests at most 1 color change, it is a tantalizing open problem to prove the existence of an antipodal path with $o(n)$ color changes~\citep{soltesz1switchConjecture2017}.

\section{Finite Encoding} \label{sec:enc}
For basics on satisfiability and encodings, we direct the reader to~\citet{BiereHeuleMaarenWalsh09}.
In this section we describe our encodings searching for 2-colorings of the edges of $Q_n$ that would constitute counterexamples to~\Cref{conj:norine}~or~\Cref{conj:norine-geodesic}.
Concretely, for each value of $n$, we build CNF formulas $\Phi_n$ and $\Psi_n$ such that:
\begin{itemize}
    \item $\Phi_n$ is unsatisfiable if, and only if,~\Cref{conj:norine} holds for $n$.
    \item $\Psi_n$ is unsatisfiable if, and only if,~\Cref{conj:norine-geodesic} holds for $n$.
\end{itemize}

Previous work either used propagators to avoid monochromatic geodesics~\citep{zulkoskiCombiningSATSolvers2017a} or introduced a clause for each antipodal geodesic~\citep{frankston2024provingnorinesconjectureholds}, resulting in a large encoding. In turn, we will now present encodings that are much more efficient both in theory and in practice. Since the constructions of $\Phi_n$ and $\Psi_n$ are almost identical, we will present a single construction and specify the only constraint that changes depending on whether we consider $\Phi_n$ or $\Psi_n$.

A first observation to improve upon the encoding of~\citet{frankston2024provingnorinesconjectureholds} is that while they encode antipodal colorings explicitly (saying an edge $\{u,v\}$ is red iff $\{\bar{u}, \bar{v}\}$ is blue), we do this implicitly by focusing on the ``lexicographically smaller'' half of the graph. 
For any two distinct sequences $s = (s_1, s_2, \ldots, s_n)$ and $t = (t_1, t_2, \ldots, t_n)$, we say $s \prec_{\text{lex}} t$ (i.e., $s$ is lexicographically smaller than $t$) if at the first index $k \in \{1, \ldots, n\}$ where $s_k \ne t_k$, we have $s_k < t_k$.
We use variables $r_{u, v}$ for $\{u,v\} \in E(Q_n)$ for $u \lexprec v$ to indicate whether a red edge is present from $u$ to $v$. By abuse of notation, we write \( r_{u,v} \) for all \( u,v \in V(Q_n) \), with the understanding that it refers to 
$ r_{\min_{\lexprec}(u,v),\, \max_{\lexprec}(u,v)} $.
For antipodal colorings, we omit variables $r_{u,v}$ if $\antipodal{v} \lexprec u$ by replacing them with $\lnot r_{\antipodal{v},\antipodal{u}}$.

In addition, we have variables $p_{u, v}$ for $u \lexprec v$ and $u \lexprec \antipodal{u}$ to indicate whether there is a red path (in case of $\Phi_n$, for~\Cref{conj:norine}) or a red geodesic (in case of $\Psi_n$, for~\Cref{conj:norine-geodesic}) from $u$ to $v$.\footnote{Note that we only require the variable to be true if there is a monochromatic path but not the other direction.}
We have the following clauses:
\begin{equation}\label{eq:constraint1}
    r_{u, v} \rightarrow p_{u, v}
\end{equation}
for $u \in V(Q_n), v \in N(u), u \lexprec \antipodal{u}$ and 
% These are $(2^n \cdot n) / 2$ many clauses. So for $n = 8$, these amount to $1024$ clauses.
\begin{equation}\label{eq:constraint2}
    (p_{u, v} \land r_{v, w}) \rightarrow p_{u, w}
\end{equation}
for $u,v,w \in V(Q_n), w \notin N(u), w \in N(v), u \lexprec \antipodal{u}$. 
 We use
$\dist{u}{v} 
$ to denote the \emph{distance} between two vertices.
If we want to restrict our paths to geodesics then we have to further restrict the condition to $\dist{u}{v} + 1 = \dist{u}{w}$, i.e., the distance to vertex $u$ increases. 
% There are $2^n \cdot n \cdot (2^n - 2) \cdot \frac{1}{2} = n \left(2^{2n-1} - 2^{n}\right)$ such clauses. For $n = 8$ this gives $260096$ clauses.
To encode the absence of monochromatic paths/geodesics, we add the unit clauses  $\lnot p_{v, \bar{v}}$ for  $v \in V(Q_n), v \lexprec \antipodal{v}$.

Our constraints~\eqref{eq:constraint1} and~\eqref{eq:constraint2} add up to at most $2^{n-1} \cdot 2^n \cdot n$ clauses, which is polynomial in $|V(Q_n)|$. In contrast, explicitly using a single clause for each antipodal geodesic results in $2^{n - 1} \cdot n!$ clauses. 
\Cref{tab:table_encodings} shows the number of variables and clauses for the different encodings: for $n\geq 6$ our encodings use significantly fewer clauses.

% \todo{Maybe mention number of variables }

% \begin{figure}

% % \centering
% % \begin{tikzpicture}[scale=0.6]
% % \begin{axis}[ymode=log,
% %     legend style={at={(0,1)}, anchor=north west, legend cell align=left}
% % ]
% % \addplot[
% %     color=blue,
% %     mark=*,
% %     ]
% %     coordinates {
% %     (2, 16)(3, 96)(4, 512)(5, 2560)(6, 12288)(7, 57344)(8, 262144)(9, 1179648)
% %     };
% %     %\legend{Compact encoding}
% % \addplot[
% %     color=red,
% %     mark=*,
% %     ]
% %     coordinates {
% %     (2, 4)(3, 24)(4, 192)(5, 1920)(6, 23040)(7, 322560)(8, 5160960)(9, 92897280)
% %     };
% %     \legend{Compact encoding, Number of geodesics}
% % \end{axis}
% % \end{tikzpicture}
% \input{AuthorKit25/plot_clauses_variables}
% \caption{Comparison of the size of the compact SAT encoding versus the number of antipodal geodesics for $Q_n$ with $n=3,\ldots,9$ on a logarithmic scale.}
% \label{fig:encoding_vs_geodesics}
% \end{figure}

\definecolor{bestbg}{HTML}{DDF3B0} 
\newcommand{\best}[1]{\cellcolor{bestbg}\textbf{#1}}
\begin{table}[ht]
\caption{Comparison of our encodings vs.~\citeauthor{frankston2024provingnorinesconjectureholds}. } 
    \centering
    % \footnotesize
    \begin{tabular}{@{}r|r@{~~}r@{~~}r|r@{~~}r@{~~}r@{}}
        \toprule
         $n$ &  \multicolumn{3}{c|}{$\#$Variables}
 & \multicolumn{3}{c}{$\#$Clauses}
 \\
 & F\&S & $\Phi_n$ & $\Psi_n$  & F\&S & $\Phi_n$ & $\Psi_n$ \\ \midrule 
% 3 & \textbf{12} & 235 & 227 & \textbf{38} & 667 & 619  \\
4 & \best{32} & 776 & 760 & \best{227} & 2.4K & 2.1K  \\
5 & \best{80} & 2.2K & 2.2K & \best{2.0K} & 8.1K & 6.2K  \\
6 & \best{192} & 6.5K & 6.5K & 23.2K & 30.4K & \best{20.0K}  \\
7 & \best{448} & 21.4K & 21.3K & 323.0K & 125.5K & \best{73.5K}  \\
8 & \best{1.0K} & 76.2K & 76.0K & 5.2M & 544.7K & \best{296.9K}  \\
9 & \best{2.3K} & 285.6K & 285.1K & 92.9M & 2.4M & \best{1.3M}  \\
         \bottomrule
    \end{tabular}
    
\label{tab:table_encodings}
\end{table}

% \todo{Mention minor tweaks}

\subsection{Symmetry breaking}\label{subsec:symmetry-breaking}
% \todo{Breaking symmetries for edge coloring slightly harder then vertex coloring. Maybe mention.}

An important aspect of combinatorial search via SAT solvers is breaking symmetries, i.e., avoiding isomorphic copies in the search space \citep[e.g.,][]{CodishMillerProsserStuckey19,KirchwegerSzeider24}. 

\citet{zulkoskiCombiningSATSolvers2017a} opted to exploit symmetries by learning some symmetric version of a geodesic whenever the propagator encounters a monochromatic antipodal geodesic. 
%\todo{I guess only from the generator otherwise a single call geodesic and all its isomorphic versions is enough.}
%Note that if all symmetries are applied then a single call of the propagator is sufficient. \todo{Not sure whether to mention there description doesn't really make sense (page 17)}
    \citet{frankston2024provingnorinesconjectureholds} encoded that the first vertex has red edges to its first $\lfloor n/2 \rfloor$ neighbors, and a blue edge to the next one.

We opt for a more thorough symmetry breaking by adding lex-leader constraints~\citep{CrawfordGinsbergLuksRoy96}.
Given two sequences of literals $(x_1, \ldots, x_m)$ and $(y_1, \ldots, y_m)$ a \emph{lex-leader constraint} ensures that $(\phi(x_1), \ldots, \phi(x_m)) \preceq_{\text{lex}} (\phi(y_1), \ldots, \phi(y_m))$ for each model $\phi$.
These constraints can be used to prune the search space while preserving satisfiability.

% \todo[inline]{0/1 coloring instead of red blue}

Let $\pi: \![n] \!\to \![n]$ be a permutation, $f: \![n]\! \to \!\{0,1\}$, and consider the symmetry $S_{\pi,f}: V(Q_n) \to V(Q_n)$ defined coordinate-wise as
%\begin{equation}
\[
 \left(S_{\pi,f}(v)\right)_i=  v_{\pi(i)} + f(i) \bmod 2.
\]
%\end{equation}
The permutation $\pi$ reorders the dimensions, while the function $f$ corresponds to ``flipping'' a dimension, i.e., swapping the role of $0$ and $1$ in that dimension. We use $\mathcal{H}_n$ to denote the set of all symmetries of $Q_n$. This is also known as the \emph{hyperoctahedral group}. This group $\mathcal{H}_n$ acts on the edges of $Q_n$ in a natural way, i.e., $S_{\pi,f}( \{u,v\}) = \{S_{\pi,f}(u), S_{\pi,f}(v)\}$.

Then, if $c: E(Q_n) \to \{0,1\}$ is a counterexample to~\Cref{conj:norine}, then the coloring
$c'(e) = c(S_{\pi,f}(e)) $ is also a counterexample.
% \todo{Example if enough space.} 
 We want to avoid the solver having to refute all these \emph{isomorphic} colorings separately.

Let $e_1, e_2 \ldots, e_m$ be a fixed ordering of the edges of $Q_n$.  We want to restrict our search for colorings $c$ which are minimal by this ordering, i.e., 
\[
     (c(e_1),  \ldots, c(e_m)) \preceq_{\text{lex}} ( c(S_{\pi,f}(e_1)), \ldots, c(S_{\pi,f}(e_m)))
\]
for all $S_{\pi,f} \in \mathcal{H}_n$.
In principle,  we could add a lex-leader constraint for each symmetry $S_{\pi,f} \in \mathcal{H}_n$. The problem with this approach is the number of symmetries; there are $n!\cdot 2^n$, which would be a bottleneck of our encoding.
Similarly to symmetry-breaking in graph-search problems by \citet{CodishMillerProsserStuckey19}, we only consider a subset of these symmetries which are ``simple.''
We break symmetries stemming from $S_{\pi,f}$ where $\pi$ is a transposition (exactly two elements are swapped) and $f$ maps to $1$ for at most one value.

Since there are $n2^{n-1}$ edges, the number of clauses of a lex-leader constraint is not negligible even for a single symmetry.
Based on a parameter \texttt{max\_comp}, we aim to shorten the maximum number of comparisons between two sequences as follows.
First, for a given symmetry $S_{\pi,f}$, we simplify the edge sequence by removing fixpoints (i.e., elements $e$ with $S_{\pi,f}(e) = e$). Next, we only keep the first  \texttt{max\_comp} elements in our edge sequence and only construct the lex-leader constraint based on this shortened sequence.
We run experiments testing different values, and observe that setting \texttt{max\_comp} in the range [10,20] breaks a significant number of symmetries without adding too many clauses (see~\Cref{sec:experiments}). % \todo{Describe a little bit more in detail.}

By selecting the first $n$ edges in the ordering to be those incident to the vertex $\vec{0} = (0, \ldots, 0)$, we ensure that in any lexicographically minimal coloring, the vertex $\vec{0}$ has the smallest \emph{red degree}. We can enforce this property explicitly using cardinality-constraint variables (i.e., if $d_{v, i}$ denotes that vertex $v$ has red degree $ \geq i$, then $  d_{\vec{0}, i} \to d_{v, i}$ for every other $v$), and such variables can be constructed according to~\citet{Sinz05}. Note that such restrictions typically go beyond what can be enforced using simple transpositions of dimensions, as they often require more complex permutations.
% \todo{Not sure whether it is done in the current version}

For our experiments, we define an ordering of the edges in the $n$-dimensional discrete hypercube as follows: we begin with the vertex $(0, \ldots, 0)$ and include all edges incident to it. We then iterate over the remaining vertices in lexicographic order, starting from $(0, \ldots, 0, 1)$, and for each vertex, we add its incident edges that have not yet been included in the ordering. This process continues until all edges have been assigned a position in the sequence.

To justify our choice of symmetries for generating lex-leader constraints, we also conducted experiments involving permutations of more dimensions. The results, in~\Cref{sec:experiments}, indicate that the overhead introduced by the increased number of constraints outweighed the benefits gained from breaking a larger set of symmetries.

Finally, we remark that symmetry-breaking constraints for $Q_n$ were studied by~\citet{LPAR2023:Toward_Optimal_Radio_Colorings} in a vertex-coloring problem with $k > n$ colors, which is significantly easier: there one can break the $n!$ symmetries between the dimensions by simply enforcing that if $u$ and $v$ are neighbors of $\vec{0}$, then $\text{color}(u) \leq \text{color}(v) \iff u \preceq_{\text{lex}} v$.

\iffalse
We also implemented a complete symmetry breaking using propagators similar to the SAT Modulo Symmetries framework~\citep{KirchwegerSzeider24} but the performance  is comparable, therefore we decided to not provide more details.
\todo{Tomas: this paragraph reads strange, I'd either leave this out or explain}
\fi

\subsection{Cube And Conquer Parallelism}

As the dimension $n$ increases, the resulting formulas become increasingly challenging for modern SAT solvers. To address this, we use the \emph{Cube And Conquer} (C\&C) approach~\citep{HeuleKullmannWieringaBiere11, HeuleKullmannMarek16,HeuleKullmannBiere18} to enable parallel solving.

C\&C splits the search space into independent subproblems using \emph{cubes}, which are conjunctions of literals. Given a set of cubes $\Gamma = \{\gamma_1, \ldots, \gamma_k\}$ such that $\bigvee_{i =1}^k \gamma_i$ is a tautology, we can replace the original formula $F$ with the logically equivalent formula $F \land \bigvee_{i =1}^k \gamma_i$.
This allows solving each subproblem $F \land \gamma_i$ independently. If any subproblem is satisfiable, then $F$ is satisfiable. If all are unsatisfiable, then $F$ is unsatisfiable.

We aim to generate cubes such that the subproblems $F \land \gamma_i$ have roughly equal difficulty, and each cube requires distinct reasoning to be refuted.   While in some cases doing this effectively has required handcrafted \emph{``splits''}~\citep{heuleHappyEndingEmpty2024, SubercaseauxHeule23}, in our case general-purpose splitting tools have proved effective: \texttt{march\_cu}~\citep{HeuleKullmannWieringaBiere11} and \texttt{proofix}~\citep{proofix}. Both of these can take a depth parameter $d$ and generate roughly $2^d$ cubes; we experimented with $d \in \{9, \ldots, 15\}$, and present results in~\Cref{sec:experiments}.

% \todo[inline]{Mention prerun}

% Let $e_1, \ldots, e_m$ be the ordering of the edges used for symmetry breaking.
% We use a subset $\{e_i, e_{i+1}, \ldots, e_{i + r} \}$ for creating the cubes by taking all $2^{r+1}$ possible truth assignments to these edge variables.
% Each assignment defines a cube, i.e., a conjunction of literals over the selected edges, which partitions the search space accordingly.
% We need to choose suitable values for $i$ and $r$. If $i$ is too small, the resulting cubes tend to be unbalanced. For instance, setting the first edge to true will, due to the design of our symmetry breaking, force all other edges to be true as well—leading to trivially refutable cubes. On the other hand, if $r$ is too large, the number of generated cubes becomes excessive; if too small, the resulting parallelism is insufficient.

% \todo[inline]{Mention splittin of other papers + Splitting with march}

\section{Asymptotic Improvement}\label{sec:asymptotics}

A natural way to relax~\Cref{conj:norine-allcolorings} is to ask for a path between antipodal vertices that changes colors a \emph{small number of times} (instead of at most once). For any path $P$, its number of color changes, denoted $\gamma(P)$, is the smallest $k$ such that $P$ can be written as the concatenation of $k+1$ monochromatic paths $P_1, \ldots, P_{k+1}$. Equivalently, $\gamma(P)$ is the number of vertices of $P$ that have two edges induced by $P$ with different colors.
 While~\Cref{conj:norine-allcolorings} states that any $2$-edge-coloring of $Q_n$ has an antipodal path with $\gamma(P) \leq 1$, the trivial bound is $\gamma(P) \leq n-1$, and perhaps surprisingly, the best results thus far are still linear in $n$. \citet{LEADER201429} proved that there is always a path $P$ with $\gamma(P) \leq \lfloor \frac{n}{2}\rfloor$, and the state of the art is given by the following result of Dvo\v{r}\'ak.

 \begin{theorem}[\citef{dvorakNoteNorinesAntipodalColouring2020}]\label{thm:dvorak}
     For every $2$-coloring of the edges of $Q_n$, there is an antipodal geodesic $P$ with $\gamma(P) \leq (\nicefrac{3}{8} + o(1))n$.
 \end{theorem}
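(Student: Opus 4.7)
The plan is to reduce the general $n$-dimensional statement to a finite case analysis on $Q_3$, combined via a block decomposition and an averaging argument. First I would partition the coordinate set $[n]$ into $m = \lfloor n/3 \rfloor$ disjoint triples, absorbing any remainder of size at most $2$ into the $o(n)$ slack. Any antipodal geodesic in $Q_n$ that exhausts the coordinates of triple $i$ before touching those of triple $j$ whenever $i < j$ decomposes as $P = P_1 \cdot P_2 \cdots P_m$, where each $P_i$ is an antipodal geodesic of the $Q_3$ sub-cube spanned by triple $i$, inheriting the edge coloring from $Q_n$. Consequently,
\[
\gamma(P) \;=\; \sum_{i=1}^{m}\gamma(P_i) \;+\; \#\bigl\{\,i < m \,:\, \text{last color of } P_i \neq \text{first color of } P_{i+1}\,\bigr\},
\]
so it suffices to bound both the internal cost and the boundary term separately.

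The heart of the argument is then a finite inequality about $2$-edge-colorings of $Q_3$. Concretely, I would prove: for every coloring $c$ of $Q_3$ and every prescribed starting color $\alpha \in \{R,B\}$, there is an antipodal geodesic $Q_\alpha$ of $Q_3$ whose first edge has color $\alpha$, and the \emph{average} of $\gamma(Q_R)$ and $\gamma(Q_B)$ is at most $9/8$. This is a purely finite statement about the $2^{12}$ possible $Q_3$-colorings and can be established by an exhaustive case analysis — the natural computer-assisted incarnation of which is a small SAT query, essentially the same technique that the present paper scales up to $Q_6$.

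With this 3-dimensional lemma in hand, I would build $P$ greedily: at each block $i$, choose $P_i$ to start with the same color as the last edge of $P_{i-1}$, which makes the boundary term vanish entirely. To turn the lemma's \emph{average-over-}$\alpha$ bound into an honest \emph{average-over-blocks} bound — even though each block's incoming color is adversarially determined by the preceding one — I would randomize, for instance by picking the initial color $\alpha_1 \in \{R,B\}$ uniformly at random, or by processing the blocks in a uniformly random order so that the adversary cannot correlate the incoming color with the worst case for each block. By symmetry, the incoming color at each block is then uniform in $\{R,B\}$, so the expected cost per block is at most $9/8$, giving
\[
\mathbb{E}[\gamma(P)] \;\leq\; \tfrac{9}{8}\, m + O(1) \;=\; \tfrac{3}{8}\, n + o(n).
\]
Derandomizing yields a deterministic antipodal geodesic with $\gamma(P) \leq (\nicefrac{3}{8} + o(1))\,n$.

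The main obstacle is the second step: establishing the $9/8$ averaging bound on $Q_3$. The block factorization is elementary, and the global assembly is a standard probabilistic smoothing argument given the 3-dim lemma; but extracting precisely the constant $9/8$ from the combinatorics of $Q_3$ appears to require a delicate, though finite, case analysis. It is exactly this finite ingredient that the present paper upgrades to $Q_6$ via SAT, thereby improving the leading constant from $3/8$ to $5/16$.
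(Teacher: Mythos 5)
There is a genuine gap, and it sits exactly where you locate the ``main obstacle'': the finite $Q_3$ lemma you need is false as stated, and even its correct form does not yield the constant $3/8$ by your assembly argument. First, your lemma has no quantifier over the entry vertex, yet your block factorization forces each block $P_i$ to start at a specific vertex $u$ of its $Q_3$ sub-cube (the endpoint of $P_{i-1}$) and end at $\bar u$. For a fixed entry vertex the lemma fails outright: $u$ may have all three incident edges of the same color, so no antipodal geodesic starting with the other color exists; and even when both colors are available at $u$, both constrained optima can equal $2$ (every geodesic from $u$ to $\bar u$ in $Q_3$ has only $3$ edges, and colorings exist where all six of them alternate), so the average over $\alpha$ is $2$, not $9/8$. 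The quantity that a finite case analysis does bound is an average over the $8$ entry vertices of the sub-cube --- this is precisely the paper's $\fprime(3)=0.5$ --- and to access a vertex-average you must make the entry vertex of each block uniformly distributed in its sub-cube. Your fixed partition of the coordinates into triples destroys exactly this: the sub-cubes and entry vertices are then deterministic, and the adversary colors the bad vertex of each one. Dvo\v{r}\'ak's proof (and \Cref{thm:main}) instead samples the \emph{entire} antipodal geodesic uniformly at random first, which makes each block's sub-cube uniform over all $Q_3$ sub-cubes and its entry vertex uniform within it; that is the averaging mechanism your proposal is missing. Relatedly, your claim that randomizing the initial color or the block order makes the incoming color uniform at each block is unsupported --- the coloring can force the outgoing color of every block regardless of these choices.

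Second, even after repairing the averaging, the bookkeeping does not reach $3/8$. The vertex-averaged bound for $Q_3$ is $\fprime(3)+1 = 1.5$ color changes per block of length $3$ (internal plus boundary), i.e., $n/2+o(n)$ --- this only recovers the Leader--Long bound, consistent with \Cref{tab:bounds}, where $k=3$ fails the threshold $\frac{3k}{8}-1$. Dvo\v{r}\'ak's $3/8$ requires an additional structural ingredient your sketch omits: a dichotomy between \emph{good} and \emph{bad} $Q_3$-colorings, a counting argument showing that in any coloring of $Q_n$ a definite fraction of the $Q_3$ sub-cubes are good, and the fact that bad colorings let one prescribe the first edge color for free. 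The present paper sidesteps that delicacy entirely by replacing $Q_3$ with $Q_6$, where the crude bound $\fprime(6)+1 = 1.875$ per block of length $6$ already gives $0.3125n+O(1)$.
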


\begin{figure}
    \centering
        \input{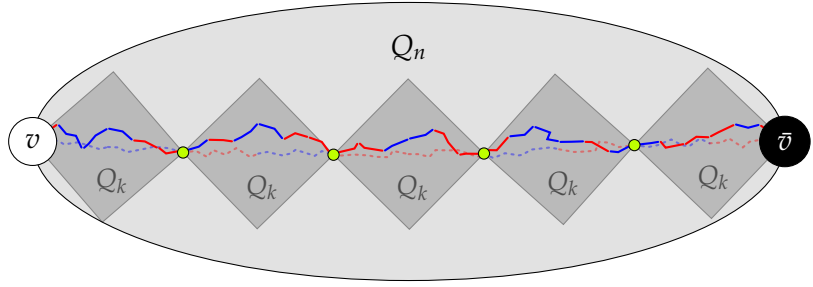}

    \caption{Illustration of the proof for~\Cref{thm:asymptotic}, following the method of~\citet{dvorakNoteNorinesAntipodalColouring2020}. The dotted lines show the improved antipodal geodesic by local changes.}
    \label{fig:proof_illustration}
\end{figure}
% For not necessarily antipodal coloring:
% \begin{itemize}
%     \item Leader and Long [LL14] show that there is always a
% monochromatic geodesic of length $\lceil \frac{n}{2} \rceil $. Only have to make the swaps of the later half smaller. 
% \item  Dvořák improved to $(\frac{3}{8} + o(1))\cdot n$ swaps. \url{https://arxiv.org/pdf/1912.07504}
% \end{itemize}

We improve upon~\Cref{thm:dvorak} with~\Cref{thm:asymptotic}.
As our improvement builds upon the ideas of Dvo\v{r}\'ak, and the discussion given by~\citet{ReuSlides}, it is worth sketching the high-level idea of Dvo\v{r}\'ak's proof first. The proof uses the probabilistic method, arguing that a randomized algorithm constructs an antipodal geodesic whose expected number of color changes is $(\nicefrac{3}{8} + o(1))n$, and thus there must exist some antipodal geodesic as good as that expected value. 
The algorithm starts by selecting a uniformly random antipodal geodesic $P$, and then partitions $P$ into $n/3$ subgeodesics $P_1, \ldots, P_{n/3}$ of length $3$ each (one can assume that $n$ is a multiple of $3$, since the difference is absorbed by the $o(n)$ term). Note that each subpath $P_i$ is an antipodal path of a subgraph $H_i$ isomorphic to $Q_3$, as illustrated in~\Cref{fig:proof_illustration}. For each $P_i$, the algorithm will keep its first and last vertex, but potentially replace the two intermediate vertices in $H_i$ in order to lower the number of color changes.

Note that, as the starting antipodal geodesic was chosen uniformly at random, it is guaranteed that each vertex has the same probability of being the first vertex in the subgeodesic $P_i$.
To achieve a first simple upper bound on the number of expected color changes of the randomized algorithm, one can take the expected color changes within each subpath $P_i$ after potentially replacing the two intermediate vertices by the best possible options. Additionally, between each pair of consecutive subpaths $P_i, P_{i+1}$ there could be a color change.

The expected color changes within each subpath $P_i$ can be upper bounded by a \emph{worst case} coloring of $Q_3$, i.e., 
\[b := \frac{1}{ |V(Q_3)|} \max_{c \in C} \sum_{u \in V(Q_3)} s_{c, u, \antipodal{u}} \] where $s_{c, u, \antipodal{u}}$ is the minimum number of color changes needed to reach $\antipodal{u}$ with an antipodal geodesic. Using $b$, we derive the following bound on the number of color changes for $Q_n$:
\[ \lfloor  \nicefrac{n}{3} \rfloor\cdot b +  \lfloor \nicefrac{n}{3} \rfloor + 3.\]
The first summand stems from the color changes within the subpaths $P_i$, the second from color changes from $P_i$ to $P_{i+1}$, and the last if $n$ is not divisible by 3. A detailed proof is presented later on, for the stronger statement of~\Cref{thm:main}.

Note that this simple version leads to an asymptotic bound of $(\nicefrac{2}{3} + o(1)) n$.
To achieve a better bound of $(\nicefrac{3}{8} + o(1))n$, Dvo\v{r}\'ak also showed that, for any edge 2-coloring of $Q_n$, a certain fraction of the $Q_3$ subgraphs have a vertex such that all 3 incident edges have the same color. He called these colorings of $Q_3$ \emph{good} and all other colorings \emph{bad}.\footnote{Strictly speaking,~\cite{dvorakNoteNorinesAntipodalColouring2020} called a coloring \emph{good} if one can find four antipodal geodesics with different endpoints, such that these four geodesics
have at most two color changes in total.} For a bad coloring of a $Q_3$ and $v\in V(Q_3)$, there is always a geodesic to $\antipodal{v}$ starting with a red edge with the minimum number of color changes~\citep[Lemma 8]{dvorakNoteNorinesAntipodalColouring2020}. Symmetrically, for bad colorings we can also start with a blue edge without increasing the number of color changes within $Q_3$. In other words, if a subpath $P_i$ ends in a red/blue edge and $P_{i+1}$ is a bad coloring, then we can choose color red/blue as the first edge in $P_{i+1}$, thus avoiding one color change.

% REMARK: we cannot really use the half of the monochromatic geodesic because it destroys the probability arguments, because now each start and end point of a sub hypercube are not equally likely anymore. 

% \todo[inline]{Mention that we cannot say so directly that the worst coloring cannot apear that often.}

Our idea is to partition the random geodesic into larger subpaths (of size $k$ instead of $3$) and use a SAT solver to get an upper bound on the average number of color changes within $Q_k$.
Let $f(k)$ be the average number of color changes of $Q_k$ for the ``worst'' coloring of $Q_k$, i.e., 
\begin{equation}
    \label{eq:fn}
    f(n) := \max_{c \in C} \frac{ \sum_{u \in V} s_{c, u, \antipodal{u}}}{2^n},
\end{equation}
where again $s_{c, u, \antipodal{u}}$ is the minimum number of color changes needed to reach $\antipodal{u}$ with a geodesics.
For a given $n,k \in \mathbb{N}$, we derive the following upper bound on the number of expected color changes:
\( \lfloor \nicefrac{n}{k} \rfloor \cdot f(k) + \lfloor \nicefrac{n}{k} \rfloor + (n \bmod k).\)

We refine this by exploiting that for a coloring $c$ and some vertices within $Q_k$, we can choose the color of the first edge without increasing the number of changes, allowing us to reduce the number of color changes between $P_i$ and $P_{i+1}$ in these cases.
$s^r_{c, u, \antipodal{u}}$ (resp. $s^b_{c, u, \antipodal{u}}$) is the minimum number of color changes needed to reach $\antipodal{u}$ with a geodesic starting with a red (resp. blue) edge, and $s'_{c, u, \antipodal{u}} = \max(s^r_{c, u, \antipodal{u}}, s^b_{c, u, \antipodal{u}})$.
\begin{equation}
    \label{eq:fprimen}
    \fprime (n) := \max_{c \in C} \frac{ \sum_{u \in V} \min (s_{c, u, \antipodal{u}}, s'_{c, u, \antipodal{u}} - 1) }{2^n}.
\end{equation}

% \begin{lemma} \label{thr:main}
%     Let $n,k \in \mathbb{N}$. For any 2-coloring $c$ of the edges of the hypercube $Q_n$, there exists an antipodal geodesic $P$ such that the number of color changes along $P$ is at most
%     \[ \left\lfloor \nicefrac{n}{k} \right\rfloor \cdot \fprime (k) + \left\lfloor \nicefrac{n}{k} \right\rfloor + (n \bmod k ). \]
% \end{lemma}
We will make use of $\fprime(n)$ to analyze~\Cref{alg:rand},
a randomized algorithm which takes a $2$-coloring of $Q_n$ and an integer $k$ as input and outputs an antipodal geodesic.

\begin{algorithm} 
\caption{Randomized Geodesic with Few Color Changes in $Q_n$} \label{alg:rand}
\begin{algorithmic}[1] 
\REQUIRE Integer $n,k$
\REQUIRE $2$-coloring $c : E(Q_n) \to \{0,1\}$
\ENSURE An antipodal geodesic

% \STATE Choose a vertex $x \in \{0,1\}^n$ uniformly at random
%\STATE Let $\overline{x}$ be the antipodal vertex of $x$
\STATE $P \gets$ choose an antipodal geodesic uniformly at random
\STATE $m \gets \left\lfloor \nicefrac{n}{k} \right\rfloor$
\STATE Partition $P=(v_0, \ldots, v_n)$ into $m$ consecutive chunks of length $k$ and one final chunk of length $n \bmod k$ (if nonzero):
\STATE \hspace{\algorithmicindent} $P = P_0 \oplus P_1 \oplus \cdots \oplus P_{m-1} \oplus P_{\text{rem}}$
\STATE \hspace{\algorithmicindent} where $P_i = (v_{ki}, v_{ki+1}, \ldots, v_{k(i+1)})$ for $i < m$
\STATE \hspace{\algorithmicindent} and $P_{\text{rem}} = (v_{km}, \ldots, v_{n})$ if $n \bmod k \neq 0$

\FOR{$i = 0$ to $m-1$}
    \STATE $v_{\text{start}} \gets$ first vertex of $P_i$
    \STATE $v_{\text{end}} \gets$ last vertex of $P_i$
    % \STATE $H_i \gets$ unique subcube $Q_k$ in $Q_n$ containing $v_{\text{start}},v_{\text{end}}$
    \STATE Find a geodesic from $v_{\text{start}}$ to $v_{\text{end}}$ with minimal number of color changes under $c$. 
     \IF{$i > 0$}
        \STATE If possible, choose such a geodesic starting with an edge whose color matches the last edge of $P_{i-1}$ (to avoid extra color change)
    \ENDIF
    \STATE Replace $P_i$ in $P$ with this optimized path
\ENDFOR
\RETURN $P$
\end{algorithmic}
\end{algorithm}

\begin{lemma} \label{thm:main}
    Let $n, k \in \mathbb{N}$ with $2 \leq k \leq n$, and let $c$ be a $2$-edge-coloring of the hypercube $Q_n$. Then the expected number of color changes in the antipodal geodesic returned by \Cref{alg:rand} is at most
    \[
        \left\lfloor \nicefrac{n}{k} \right\rfloor \cdot \fprime(k) + \left\lfloor \nicefrac{n}{k} \right\rfloor + (n \bmod k).
    \]
\end{lemma}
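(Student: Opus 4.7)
The plan is to decompose the total number of color changes of the path returned by the algorithm into three contributions: the inner color changes within each chunk $P_i$ for $i \in \{0, \dots, m-1\}$, the join change between consecutive $P_{i-1}$ and $P_i$ for $i \geq 1$, and the contribution of the remainder $P_{\text{rem}}$ together with its join from $P_{m-1}$. A key distributional observation is that a uniformly random antipodal geodesic of $Q_n$ is equivalently generated by choosing a uniformly random starting vertex $v \in V(Q_n)$ and a uniformly random permutation $\sigma$ of the $n$ coordinates (the order in which coordinates are flipped). Consequently, the starting vertex $u_i$ of chunk $P_i$ agrees with $v$ on the $k$ coordinates defining the $Q_k$ subgraph $H_i$ in which $P_i$ lives (those have not yet been flipped), so conditional on $H_i$, the vertex $u_i$ is uniform on $V(H_i)$.

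The crux is to prove, for each $i \geq 1$, the deterministic bound
\[
 s_i + j_i \;\leq\; \min\bigl(s_{c|_{H_i}, u_i, \antipodal{u_i}},\, s'_{c|_{H_i}, u_i, \antipodal{u_i}} - 1\bigr) + 1,
\]
where $s_i$ counts the inner color changes of the optimized $P_i$ and $j_i \in \{0,1\}$ records the join change from $P_{i-1}$. A case split on whether $s^r = s^b$ handles it: in that symmetric case any color can start a min-change geodesic, so matching always succeeds, $j_i = 0$, and $s_i + j_i = s = (s-1)+1 = \min(s, s'-1)+1$; otherwise $s' > s$, so $\min(s, s'-1) = s$, and the algorithm pays at most one join change, giving $s_i + j_i \leq s + 1$. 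Taking expectation over $u_i$ (uniform on $V(H_i)$) and then over $H_i$, the definition of $\fprime(k)$ in~\eqref{eq:fprimen} yields $\E[s_i + j_i] \leq \fprime(k) + 1$. For the first chunk $P_0$ there is no preceding join, and $\E[s_0] \leq f(k) \leq \fprime(k) + 1$, where the last inequality follows from the pointwise bound $\min(s, s'-1) \geq s - 1$ after averaging and maximizing over $c$.

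Every chunk thus contributes at most $\fprime(k) + 1$ in expectation. The remainder $P_{\text{rem}}$ has $n \bmod k$ edges and contributes at most $(n \bmod k) - 1$ internal color changes plus at most one join change from $P_{m-1}$, i.e., at most $n \bmod k$ in total (vacuously zero when $n \bmod k = 0$). Summing over the $m = \lfloor n/k \rfloor$ chunks plus the remainder yields the claimed bound $\lfloor n/k \rfloor \cdot \fprime(k) + \lfloor n/k \rfloor + (n \bmod k)$. The main obstacle I anticipate is the case analysis that establishes the deterministic inequality above, since this is what links the algorithm's matching heuristic to the precise quantity averaged inside $\fprime$; the probabilistic setup is routine but must be stated carefully in order to legitimately invoke the worst-case coloring definitions of $f(k)$ and $\fprime(k)$ on each random $H_i$.
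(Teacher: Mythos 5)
Your proposal is correct and follows essentially the same route as the paper's proof: decompose the color changes into per-chunk internal changes plus join changes plus the remainder, observe that conditional on the subhypercube $H_i$ the chunk start vertex is uniform on $V(H_i)$, bound the per-chunk contribution by $\min(s,s'-1)+1$, and average to get $\hat f(k)+1$ per chunk. Your case analysis making the deterministic bound $s_i + j_i \le \min(s,s'-1)+1$ explicit (splitting on $s'=s$ versus $s'>s$) and your handling of $P_0$ via $f(k)\le\hat f(k)+1$ are more spelled out than in the paper, which jumps to $\E[\cdot\mid X_{q,i,v}]\le\hat f(k)+1$ and relies implicitly on the averaging over $v\in V(q)$ to make that line legitimate, but these are presentational rather than substantive differences.
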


\newcommand{\subk}{Q_{\downarrow k}}
\begin{proof}

We focus on bounding the number of color changes between $v_0$ and $v_{k  \left\lfloor \nicefrac{n}{k} \right\rfloor}$, since for the remaining part of the path, we can assume the worst case of having a color change at each vertex, resulting in at most $n \bmod k$ additional color changes.
Let $P' = (v'_0, v'_1, \ldots, v'_{k\left\lfloor \nicefrac{n}{k} \right\rfloor })$ be the geodesic returned by~\Cref{alg:rand} without the remaining part $P_\text{rem}$. Note that $v'_{ik} = v_{ik}$, meaning that the start and endpoint of the subgeodesics do not change.

Let $\subk$ denote the set of all subgraphs of $Q_n$ isomorphic to $Q_k$.
Let us define events $X_{q,i,v}$ for $q \in \subk$, $v \in V(Q_n)$, and $0 \leq i < \left\lfloor \nicefrac{n}{k} \right\rfloor$, where $X_{q,i,v}$ denotes the event that (i) $v = v_{ki}$, (ii) $v_{ki} \in q$, and (iii) $v_{k(i+1)} \in q$. Note that a sub-hypercube of dimension $k$ is uniquely defined by two vertices with distance $k$. We use $H_i$ for the unique sub-hypercube containing $v_{ki},  v_{k(i+1)}$.

Let $\sigma(p)$ denote the number of color changes along a path $p$ in $Q_n$ under the coloring $c$ and $m = \left\lfloor \nicefrac{n}{k} \right\rfloor$. Then the expected number of color changes in the final path $P'$ is:

%\resizebox{\linewidth}{!}{
%\begin{minipage}{\linewidth}
\begin{align*}
\mathbb{E}\left[\sigma(P')\right]
&=  \mathbb{E}\left[ \sum_{i=0}^{m - 1} \sigma(P_i') + \mathbf{1}_{i > 0} \cdot \sigma(v'_{ki-1}, v_{ki}, v'_{ki+1}) \right] \\
&= \sum_{i=0}^{m - 1} \mathbb{E}\left[ \sigma(P_i') + \mathbf{1}_{i > 0} \cdot \sigma(v'_{ki-1}, v_{ki}, v'_{ki+1}) \right] \\
&= \sum_{i=0}^{m - 1} 
   \sum_{q \in \subk} \sum_{v \in V(Q_n)}  \mathbb{E}\Big[ \sigma(P_i')  + 
   \mathbf{1}_{i > 0} \cdot \sigma(v'_{ki-1}, v_{ki}, v'_{ki+1}) \mid X_{q,i,v} \Big] \cdot \Pr[X_{q,i,v}].
\end{align*}
%\end{minipage}
%}

Now observe that for $v \in V(q)$ (otherwise $\Pr[X_{q,i,v}] = 0$), we have:
%\resizebox{\linewidth}{!}{
%\begin{minipage}{\linewidth}
\begin{align*}
\Pr[X_{q,i,v}] &= \Pr[q = H_i \land v = v_{ki}] 
\\ &
= \Pr[q = H_i] \cdot \Pr[v = v_{ki} \mid q = H_i] \\ &
= 
\frac{1}{|\subk|} \cdot \frac{1}{2^k},
\end{align*}
%\end{minipage}
%}
since all $q \in \subk$ are equally likely to be chosen as $H_i$, and within each sub-hypercube $q \cong Q_k$, each of the $2^k$ vertices is equally likely to appear as $v_{ki}$.

The number of $k$-dimensional sub-hypercubes is given by
\(
|\subk| = 2^{n-k} \binom{n}{k}.
\)
Substituting this into the expectation, we obtain:
\begin{align*}
\mathbb{E}[\sigma(P')]
&= \frac{1}{2^{n-k} \binom{n}{k} \cdot 2^k} 
\sum_{i=0}^{\nicefrac{n}{k} - 1} 
\sum_{q \in \subk} 
\sum_{v \in V(q)} 
\E\!\left[\sigma(P_i') + \mathbf{1}_{i > 0} \cdot \sigma(v'_{ki-1}, v_{ki}, v'_{ki+1}) \mid X_{q,i,v}\right] \\
&\leq \frac{1}{2^{n-k} \binom{n}{k} \cdot 2^k} 
\sum_{i=0}^{m - 1} 
\sum_{q \in \subk} 
\sum_{v \in V(q)} 
\left( \fprime(k) + 1 \right) \\
&= \frac{1}{2^{n-k} \binom{n}{k} \cdot 2^k} 
\sum_{i=0}^{\nicefrac{n}{k} - 1} 
\sum_{q \in \subk} 
2^k (\fprime(k) + 1) \\
&= \frac{1}{2^{n-k} \binom{n}{k}} 
\sum_{i=0}^{m - 1} 
(\fprime(k) + 1) \cdot |\subk| \\
&= \sum_{i=0}^{m - 1} (\fprime(k) + 1) \\
&= m \cdot (\fprime(k) + 1).
\end{align*}

%\end{minipage}
%}

Adding at most $n \bmod k$ additional color changes for the remainder part concludes the proof.
\end{proof}

Note that the last summand in~\Cref{thm:main} could be replaced by $(\fprime(n \bmod k) + 1)$, but asymptotically this does not make any difference. 

% We refer to the appendix for a proof of~\Cref{thr:main}.

Using~\Cref{thm:main} to improve the upper bound of~\citet{dvorakNoteNorinesAntipodalColouring2020} requires proving for some $k$ that 
\[\frac{n}{k} (\fprime(k) + 1) < \frac{3}{8} \cdot n,\]
or equivalently, that  \(\fprime (k) < \frac{3k}{8} - 1\). Indeed, we achieve this via SAT, obtaining the following result.

\begin{lemma}[Proven computationally]\label{lemma:fprime_6}
    $\fprime(6) = 0.875$.
\end{lemma}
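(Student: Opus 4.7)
The plan is to verify both inequalities $\fprime(6) \geq 7/8$ and $\fprime(6) \leq 7/8$ by SAT. Since the numerator $\sum_{u \in V(Q_6)} \min(s_{c,u,\antipodal{u}}, s'_{c,u,\antipodal{u}} - 1)$ in the definition of $\fprime(6)$ is an integer and the denominator equals $64$, it suffices to (i) exhibit one 2-edge-coloring of $Q_6$ whose numerator equals $56$, and (ii) rule out any 2-edge-coloring achieving numerator $\geq 57$. I would therefore build a parameterized encoding $\Theta_6^T$ that is satisfiable iff some 2-coloring of $Q_6$ achieves numerator $\geq T$, and run it with $T = 56$ (expected SAT, providing the witness) and $T = 57$ (expected UNSAT, providing the upper bound).

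For $\Theta_6^T$ I would reuse the edge-color variables $r_{u,v}$ from $\Phi_6$ in \Cref{sec:enc}. The new ingredient is a \emph{cost-aware} version of the geodesic variables. For each source $u \in V(Q_6)$, each $v$ on some geodesic from $u$ to $\antipodal{u}$, each threshold $t \in \{0, \ldots, 5\}$, and each flavor $\star \in \{\text{any}, r, b\}$, introduce a variable $p^{\star}_{u,v,t}$ meaning ``there exists a geodesic from $u$ to $v$ whose first edge matches $\star$ and which uses at most $t$ color changes.'' The defining clauses form a layered DP along BFS levels from $u$: the base case fixes $p^r_{u,w,0}$ and $p^b_{u,w,0}$ for each neighbor $w$ of $u$ according to the edge color variable, and the inductive step propagates $p^{\star}_{u,v,t}$ across each outgoing geodesic edge $\{v,w\}$, incrementing $t$ whenever the color flips; this requires enriching the DP state with the color of the previous edge. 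For each $u$, the event $\min(s_{c,u,\antipodal{u}}, s'_{c,u,\antipodal{u}} - 1) \leq t$ is then expressible as $p^{\text{any}}_{u,\antipodal{u},t} \lor (p^r_{u,\antipodal{u},t+1} \land p^b_{u,\antipodal{u},t+1})$. A totalizer-style cardinality constraint aggregates the resulting $64$ cost indicators and enforces that the sum is at least $T$.

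The main obstacle I foresee is keeping $\Theta_6^{57}$ tractable for the UNSAT call. The number of path variables scales as roughly $|V(Q_6)|^2 \cdot k$ with an additional constant factor for the previous-edge color, and the cardinality network adds further overhead. I expect to need the lex-leader symmetry breaking and the cube-and-conquer splitting already developed in \Cref{sec:enc} to close the UNSAT side within a practical time budget; tuning the \texttt{max\_comp} parameter and the splitting depth will likely be necessary. On the SAT side, a returned assignment yields an explicit coloring $c^{*}$ whose numerator can be checked independently by computing shortest paths with color-change weights on $Q_6$ offline, giving an auditable, solver-agnostic certificate of the lower bound $\fprime(6) \geq 7/8$.
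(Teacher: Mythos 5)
Your proposal is essentially the same approach as the paper's: construct a threshold-parameterized CNF that is satisfiable iff some $2$-edge-coloring of $Q_6$ achieves average cost at least a target value, run it at the two relevant thresholds (SAT to witness the lower bound, UNSAT to certify the upper bound), and aggregate per-antipodal-pair cost indicators with a totalizer-style cardinality constraint, reusing the lex-leader symmetry breaking and cube-and-conquer machinery already in place for $\Phi_n$. The one substantive difference is an implementation detail: you index the path variables by the color of the \emph{first} edge, which matches the definition of $s'$ verbatim but, as you yourself note, forces you to also carry the last-edge color in the DP state (so the variable family becomes a $2\times 2$ product over first/last colors); the paper avoids this blowup by tracking only the last-edge color in $p^{x}_{u,v,i}$ and reading off the needed first-edge information at the antipode via geodesic reversal, which halves the number of path variables while encoding the same per-vertex cost $\min(s_{c,u,\antipodal{u}},\,s'_{c,u,\antipodal{u}}-1)$ and hence the same two-sided check.
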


Combining~\Cref{thm:main} with~\Cref{lemma:fprime_6}, our asymptotic bound of~\Cref{thm:asymptotic} follows immediately. 
The encoding used for proving~\Cref{lemma:fprime_6} is detailed in~\Cref{sec:fprime}.

% To achieve an improvement in the asymptotic bound, we must achieve 
% $\frac{n}{k} (\fprime(k) + 1) < \frac{3}{8} \cdot n$, i.e.,
% \[  \fprime (k) < \frac{3k}{8} - 1. \]

% We are using SAT solvers to compute the exact values $\fprime (k)$ for $k \leq 6$.

As a potential refinement of the bound $\fprime$, we also investigated the case where the hypercube contains a vertex whose incident edges are all of the same color. The hope was that this structural constraint might reduce the average number of color changes. However, this approach did not yield any improvement since for $n = 6$ the resulting value coincides with $\fprime(6)$.

\newcommand{\fenc}{F}
\newcommand{\fencprime}{\hat{\fenc}}
\subsection{Encoding the $f$ and $\fprime $ functions} \label{sec:fprime}

We extend the basic encoding from~\Cref{sec:enc} to allow computing the values $f(n),\fprime (n)$. More precisely, we construct formulas $\fenc(n, \alpha ),\fencprime(n,\alpha)$ which are satisfiable if and only if $f(n)  \geq \alpha, \fprime (n) \geq \alpha$ respectively. The main difference to the basic encoding is the necessity to count the number of color changes needed to reach another vertex and sum up over all antipodal vertex pairs.
For the sake of simplicity, we start with the description of $\fenc(n,\alpha)$ and later explain how to adapt it for $\fencprime(n,\alpha)$.
We use variables $p^{x}_{u,v,i}$ for $u,v \in V(Q_n), u \neq v, u \lexprec \antipodal{u}, x \in \{\text{red}, \text{blue} \}, i \in \{0, \ldots, n-1\}$ indicating whether there is a path from $u$ to $v$, whose last edge (i.e., closest to $v$) is colored $x$, with at most $i$ color changes.
%rendering the propagator-based approach useless.

We add the following constraints to ensure that the variables are set to true if a suitable path is present for $u,v,w \in V(Q_n), i \in \{0,\ldots, n-1\}, w \notin N(u), \dist{u}{v} + 1 = \dist{u}{w}, x \in \{\text{blue}, \text{red}\}$:
 \begin{align}
    r_{u,v} &\to p^{\text{red}}_{u,v, 0},               & \quad v \in N(u) \\
    \lnot r_{u, v} &\to p^{\text{blue}}_{u,v, 0},  & \quad v \in N(u) \\
    (p^{\text{red}}_{u,v,i} \land r_{v,w}) &\to p^{\text{red}}_{u,w,i},         & \quad v \in N(w) \label{eq:red_eq} \\
    (p^{\text{blue}}_{u,v,i} \land \lnot r_{v,w}) &\to p^{\text{blue}}_{u,w,i}, & \quad v \in N(w) \label{eq:blue_eq}\\
    (p^{\text{red}}_{u,v,i-1} \land \lnot r_{v,w}) &\to p^{\text{blue}}_{u,w,i}, & \; v \in N(w) \label{eq:red_swap}, i > 0 \\
    (p^{\text{blue}}_{u,v,i-1} \land r_{v,w}) &\to p^{\text{red}}_{u,w,i}, & \; v \in N(w) \label{eq:blue_swap}, i > 0 \\
    p^{x}_{u,v,i-1} &\to p^{x}_{u,v,i},               & i > 0
\end{align}
Note that we need  $\dist{u}{v} + 1 = \dist{u}{w}$ so we only consider paths along geodesics. Constraint~\eqref{eq:red_eq}~and~\eqref{eq:blue_eq} ensure that the count does not increase if the geodesic can be expanded by an edge with the same color, and Constraint~\eqref{eq:red_swap}~and~\eqref{eq:blue_swap} ensure that it increases by at most 1 if we pick a different color.

The minimum between two antipodal vertices is captured by $p^t_{u,i}$ variables:
    \begin{equation}
        p^t_{u, i} \leftrightarrow (p^{\text{red}}_{u, \bar{u}, i} \lor p^{\text{blue}}_{u, \bar{u}, i}),
    \end{equation}
    where actually only the $\leftarrow$ direction is needed, i.e., $p^{x}_{u, \bar{u}, i} \rightarrow p^t_{u, i} $  for $x \in \{\text{blue}, \text{red}\}$.

Note that for each pair $u, \antipodal{u}$ the number of color changes ($s_{u,\antipodal{u}, c}$ from Equation~\eqref{eq:fn}) is given by the number of variables $p^t_{u, i}$ set to false, i.e.,
\[s_{u, \antipodal{u}, c} = \sum_{i = 0}^{n-1} \lnot p^t_{u, i} .\]
For example if the number of changes is 2, then $p^t_{u, 0}$ and $p^t_{u, 1}$ are false. I.e., we can use cardinality constraints (we tried the \emph{modulo totalizer}~\citep{mtotalizer} and~\citet{Sinz05} with similar results)  over the negated literals to
bound the total number of color changes for all $2^{n-1}$ pairs of antipodal vertices:
%achieve a lower-bound on the average number of swaps:
    \begin{equation}
    \label{eq:f-count}
    \sum_{\substack{u \in V(Q_n)\\ {u \lexprec \bar{u}}}} \sum_{i = 0}^{n-1} \lnot p^t_{u, i} \geq 2^{n-1} \alpha.
    \end{equation}
    % which is a pseudo-boolean constraint.
%where the LHS counts the total number of swaps and the RHS is equal to $.
%where $\alpha$ is a lower bound on $f(n)$.
%\todo{make clearer}

For computing $\fprime $, we have to adapt slightly. 
We keep all constraints from $\fenc(n,\alpha)$ except~\eqref{eq:f-count}. Instead, we introduce the variables $p^t_{u, -1}$ and add  the constraints
%for $i \in \{0,\ldots,n - 1\}$
\begin{align}
    (p^{\text{red}}_{u, \antipodal{u}, i}  \land  p^{\text{blue}}_{u, \antipodal{u}, i} )\rightarrow p^t_{u, i - 1} & \qquad i \in \{0,\ldots,n - 1\}
\end{align}
This change ensures that the count is decreased by one in cases where the second term, $ s'_{c, u, \antipodal{u}} - 1 $, determines the minimum in $ \min(s_{c, u, \antipodal{u}}, s'_{c, u, \antipodal{u}} - 1) $.

We have to include the $p^t_{u, -1}$ into the count and we have to slightly shift the count because we cannot ``subtract'' in a cardinality constraint. We do this by adding $+1$ for each antipodal vertex pair to the right side of the inequality and sum over $s$ starting with $-1$:
\begin{equation}
    \sum_{\substack{u \in V(Q_n)\\ u\lexprec\bar{u}}} \sum_{i = -1}^{n-1} \lnot p^t_{u, i} \geq 2^{n-1}  \alpha + 2^{n-1} %|V(Q_n)|/2
    \end{equation}

We can also use a similar encoding for verifying~\Cref{conj:norine-allcolorings-geodesic} by adding the unit clauses $\lnot p_{u,\antipodal{u},1}^{x}$
for $u \in V(Q_n)$ and $x \in \{\text{red}, \text{blue} \}$.

\iffalse

We also consider our own strengthening of~\Cref{conj:norine-allcolorings-geodesic}. 

\begin{conjecture}\label{conj:norine-more-general}
    Any $2$-coloring of the hypercube $Q_n$ satisfies one of the following:
    \begin{enumerate}
        \item There exists a monochromatic antipodal geodesic, or
        \item There exist two antipodal geodesics starting at the same vertex, each with at most one color change, and such that their first edges have different colors.
    \end{enumerate}
\end{conjecture}

It is easy to see that~\Cref{conj:norine-more-general} for $n$ implies that~\Cref{conj:norine-allcolorings-geodesic} holds for $n + 1$.
For~\Cref{conj:norine-more-general} we 
add the unit clauses $\lnot p_{u,\antipodal{u},0}^{x}$ and additionally the binary clauses $\lnot p_{u,\antipodal{u},1}^{\text{red}} \lor \lnot p_{u,\antipodal{u},1}^{\text{blue}}$.
\fi

In summary, we have a versatile encoding which allows us to test different conjectures and compute certain values for small values of $n$ by minor modifications of the encoding.

\subsection{Counting blocking pairs}

An interesting heuristic measure loosely tied to our conjectures is the maximum number of antipodal pairs $u, \bar{u}$ such that every geodesic between them has at least $2$ color changes. We call such pairs \emph{blocking}. We define $\mu(n)$ as the maximum number of blocking pairs, i.e.,
\[
\mu(n) = \max_{c \in C} \sum_{\substack{u \in Q(n) \\ u \lexprec \bar{u}}} [s_{c, u, \bar{u}} > 1].
\]
Note that using this notation,~\Cref{conj:norine-allcolorings-geodesic} is equivalent to whether $\mu(n) < 2^{n-1}$.
%\todo{Conj.~\Cref{conj:norine-allcolorings-geodesic} is about geodesics but in the para above we say ``paths'', which is it?}

With a simple modification of the encoding $\Psi(n,\alpha)$ in~\Cref{sec:fprime} by replacing the constraint~\eqref{eq:f-count} with
\[ \sum_{\substack{u \in V(Q_n)\\ {u \lexprec \bar{u}}}} \lnot p^t_{u,1}, \]
we obtain the values and bounds presented in~\Cref{tab:table_mu}.
% \renewcommand{\arraystretch}{1.5}

\iffalse
\begin{table}[h]
\caption{Values of $\mu(n)$.} 
    \centering
    \begin{tabular}{ccr}
        \toprule
         $n$ & $\mu(n)$ & $\mu(n)/2^{n-1}$ \\
         \midrule
         3 & 1 & $\nicefrac{1}{4}$\\
         4 & 2 & $\nicefrac{1}{4}$\\
         5 & 6 & $\nicefrac{3}{8}$ \\
         6 & $\geq 14$ & $ \geq \nicefrac{7}{16}$ \\
         7 & $\geq 29$ & $\geq  \nicefrac{29}{64} $  \\ % alreaday takes a long time, so might be higher than that
         \bottomrule
    \end{tabular}
\end{table}
\fi

\begin{table}[h]
    \caption{Values of $\mu(n)$.}
    \centering
    \begin{tabular}{@{}crrrrrrr@{}}
        \toprule
        $n$ & 2 & 3 & 4 & 5 & 6 & 7 \\
        \midrule
        $\mu(n)$ & 0 & 1 & 2 & 6 & 14 & $\geq 29$ \\
        % $\mu(n)/2^{n-1}$ & $\nicefrac{1}{4}$ & $\nicefrac{1}{4}$ & $\nicefrac{3}{8}$ & $\geq \nicefrac{7}{16}$ & $\geq \nicefrac{29}{64}$ \\
        \bottomrule
    \end{tabular}
     \label{tab:table_mu}
\end{table}

%\subsection{Alternating coloring}
In fact, it is possible to construct a specific $2$-edge-coloring of $Q_n$ for any $n$ that produces $2^{n-1}(1 - o(1))$ many blocking pairs (and matches the values for odd $n$ in~\Cref{tab:table_mu}).
The coloring divides the vertices of the hypercube into layers based on the number of $1$s, and alternates colors on edges between the layers.
Formally, it is $c_n : E(Q_n) \to \{0,1\}$ defined by:
\[
\left\{ (x_1, \dots, x_i, \dots, x_n), (x_1, \dots, 1 - x_i, \dots, x_n) \right\} \mapsto \bigoplus_{j \neq i} x_j,
\]
where the edge corresponds to flipping the $i$-th bit of a vertex, and the color is the parity of the sum of all other bits. An illustration for $Q_4$ is given in \Cref{fig:coloring_Q4}.

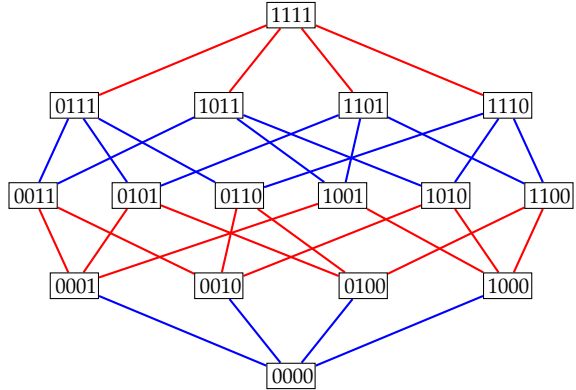
\begin{figure}[ht]
    \centering
    \begin{tikzpicture}[scale=0.6]
    \tikzset{every node/.style={text width=1.3em, inner sep=2pt, text centered, font=\scriptsize}}
\node[fill=white, draw] (0000) at (8.0,0) {0000};
\node[fill=white, draw] (0001) at (3.2,2) {0001};
\node[fill=white, draw] (0010) at (6.4,2) {0010};
\node[fill=white, draw] (0100) at (9.6,2) {0100};
\node[fill=white, draw] (1000) at (12.8,2) {1000};
[(0, 1), (0, 2), (0, 3), (1, 2), (1, 3), (2, 3)]
\node[fill=white, draw] (0011) at (2.2857142857142856,4) {0011};
\node[fill=white, draw] (0101) at (4.571428571428571,4) {0101};
\node[fill=white, draw] (0110) at (6.857142857142857,4) {0110};
\node[fill=white, draw] (1001) at (9.142857142857142,4) {1001};
\node[fill=white, draw] (1010) at (11.428571428571429,4) {1010};
\node[fill=white, draw] (1100) at (13.714285714285714,4) {1100};
\node[fill=white, draw] (0111) at (3.2,6) {0111};
\node[fill=white, draw] (1011) at (6.4,6) {1011};
\node[fill=white, draw] (1101) at (9.6,6) {1101};
\node[fill=white, draw] (1110) at (12.8,6) {1110};
\node[fill=white, draw] (1111) at (8.0,8) {1111};
\begin{pgfonlayer}{background}
\draw[opacity=1,  thick, blue] (0000) -- (0001);
\draw[opacity=1,  thick, blue] (0000) -- (0010);
\draw[opacity=1,  thick, blue] (0000) -- (0100);
\draw[opacity=1,  thick, blue] (0000) -- (1000);
\draw[opacity=1,  thick, red] (0001) -- (0011);
\draw[opacity=1,  thick, red] (0001) -- (0101);
\draw[opacity=1,  thick, red] (0001) -- (1001);
\draw[opacity=1,  thick, red] (0010) -- (0011);
\draw[opacity=1,  thick, red] (0010) -- (0110);
\draw[opacity=1,  thick, red] (0010) -- (1010);
\draw[opacity=1,  thick, red] (0100) -- (0101);
\draw[opacity=1,  thick, red] (0100) -- (0110);
\draw[opacity=1,  thick, red] (0100) -- (1100);
\draw[opacity=1,  thick, red] (1000) -- (1001);
\draw[opacity=1,  thick, red] (1000) -- (1010);
\draw[opacity=1,  thick, red] (1000) -- (1100);
\draw[opacity=1,  thick, blue] (0011) -- (0111);
\draw[opacity=1,  thick, blue] (0011) -- (1011);
\draw[opacity=1,  thick, blue] (0101) -- (0111);
\draw[opacity=1,  thick, blue] (0101) -- (1101);
\draw[opacity=1,  thick, blue] (0110) -- (0111);
\draw[opacity=1,  thick, blue] (0110) -- (1110);
\draw[opacity=1,  thick, blue] (1001) -- (1011);
\draw[opacity=1,  thick, blue] (1001) -- (1101);
\draw[opacity=1,  thick, blue] (1010) -- (1011);
\draw[opacity=1,  thick, blue] (1010) -- (1110);
\draw[opacity=1,  thick, blue] (1100) -- (1101);
\draw[opacity=1,  thick, blue] (1100) -- (1110);
\draw[opacity=1,  thick, red] (0111) -- (1111);
\draw[opacity=1,  thick, red] (1011) -- (1111);
\draw[opacity=1,  thick, red] (1101) -- (1111);
\draw[opacity=1,  thick, red] (1110) -- (1111);
\end{pgfonlayer}

    \end{tikzpicture}
    \caption{Edge coloring $c_4$ for the hypercube $Q_4$.}
    \label{fig:coloring_Q4}
\end{figure}

The coloring $c_n$ also yields a lower bound $\Omega(\sqrt{n})$ on the average number of color changes along geodesics (proofs of these bounds are in~\Cref{appendix:alternating}).
This provides a bound on how close to the original conjectures the approach based on $f$ and $\fprime$ can come: it can at best prove the existence of a path with $O(\sqrt{n})$ color changes.

% This construction yields only 12 blocking pairs for $n=6$: it is open whether there is a similarly simple general construction that achieves the $\geq 14$ from Table~\Cref{tab:table_mu}.

\section{Experimental results} \label{sec:experiments}

\paragraph{Hardware and Solvers.} We used a cluster with two AMD EPYC 7742 CPUs, each with
64 cores of
2.25-3.40GHz, running Debian. Depending on the experiment, we used the award-winning solvers \texttt{kissat}~\citep{SAT-Competition-2024-solvers} and \texttt{cadical}~\citep{BiereFallerFazekasFleuryFroleyks24}, together with the splitting tools \texttt{march\_cu}~\citep{HeuleKullmannWieringaBiere11} and \texttt{proofix}~\citep{proofix}.

\paragraph{Symmetry Breaking.}
We first evaluate the impact of the \texttt{max\_comp} parameter, which limits the number of elements compared in a lex-leader constraint. \Cref{fig:sym-param} shows the running time for proving~\Cref{conj:norine} for $n = 7$. For each value of the parameter we run 16 instances of $\Phi_7$ with \texttt{kissat} in which the variables and clauses are randomly shuffled using \texttt{scranfilize}~\citep{POS-18:Effect_Scrambling_CNFs}. The results indicate that the initial comparisons are the most effective in pruning the search space (without symmetry breaking it takes over an hour), with performance peaking at around 20 comparisons. Beyond this point, the running time increases gradually, since the number of clauses keeps increasing without breaking many symmetries.
Note that for different values of $n$, the optimal number of comparisons may vary, although we expect similar behavior. Based on these observations, we chose to allow slightly more comparisons to ensure robust performance. Furthermore, as a form of ablation testing, we ran the encoding of~\citet{frankston2024provingnorinesconjectureholds} on $n=7$ replacing their symmetry breaking with ours, which reduced the runtime from $4$ minutes to $1$ minute. The results for $\Psi_7$ are almost identical, but with a runtime increase of $70\%$  on average. 
As described in~\Cref{subsec:symmetry-breaking}, the lexicographic symmetry breaking is compatible with vertex $\vec{0}$ having the minimum \emph{red degree}, meaning that no other vertex can be incident to more red edges. If the lexicographic symmetry breaking was \emph{complete}, then it would actually enforce that vertex $\vec{0}$ has the minimum red degree, but as we perform incomplete symmetry breaking, 
it is still helpful to encode this constraint explicitly with a cardinality constraint. For example, the verification of~\Cref{conj:norine-allcolorings} for $n = 6$ goes down from $32$ seconds to $25$ seconds by the addition of this constraint.

\begin{figure}
    \begin{subfigure}{0.49\linewidth}
        \centering
\begin{tikzpicture}[scale=0.7]
  \begin{axis}[ylabel={Time (s)},xlabel={Symmetry-breaking parameter \texttt{max\_comp}},legend style={at={(0.45,0.97)}, legend cell align=left}, grid=major, grid style={gray!60!white}, ytick={20, 30, 40, 50, 60},
  xmin=0,
  xmax=51,
  ymax=71]
\addplot[color=magenta,mark=x]
coordinates {
(0, 72)
(1, 72)
(2, 65.61)
(3, 64.9)
(4, 48.79)
(5, 41.04)
(6, 40.15)
(7, 38.22)
(8, 36.4)
(9, 34.84)
(10, 36.93)
(11, 36.26)
(12, 37.1)
(13, 37.87)
(14, 37.45)
(15, 33.68)
(16, 37.17)
(17, 38.44)
(18, 38.3)
(19, 33.27)
(20, 34.43)
(21, 36.06)
(22, 37.17)
(23, 37.63)
(24, 39.84)
(25, 37.81)
(26, 36.68)
(27, 38.04)
(28, 36.59)
(29, 36.05)
(30, 45.18)
(31, 38.94)
(32, 40.34)
(33, 36.91)
(34, 41.16)
(35, 36.34)
(36, 40.42)
(37, 37.6)
(38, 40.3)
(39, 39.48)
(40, 44.69)
(41, 42.58)
(42, 37.01)
(43, 39.67)
(44, 42.36)
(45, 37.78)
(46, 42.24)
(47, 42.21)
(48, 39.62)
(49, 43.39)
(50, 45.06)
};

\addplot[color=cyan!60!black,mark=x]
coordinates {
(0, 72)
(1, 72)
(2, 55.724375)
(3, 47.945)
(4, 39.113125000000004)
(5, 33.893125)
(6, 31.6525)
(7, 32.965)
(8, 30.038124999999994)
(9, 28.791874999999994)
(10, 29.816875)
(11, 28.807500000000005)
(12, 29.35)
(13, 29.94375)
(14, 28.978125000000002)
(15, 28.9025)
(16, 29.517500000000002)
(17, 30.046874999999996)
(18, 29.2475)
(19, 28.919999999999995)
(20, 29.113125)
(21, 29.80375)
(22, 31.193749999999998)
(23, 29.806874999999998)
(24, 32.46187499999999)
(25, 30.613125)
(26, 31.54875)
(27, 30.368125)
(28, 31.939999999999998)
(29, 29.7275)
(30, 31.390625)
(31, 32.629999999999995)
(32, 31.8925)
(33, 29.909375)
(34, 32.862500000000004)
(35, 31.930624999999996)
(36, 32.317499999999995)
(37, 31.979374999999994)
(38, 33.136250000000004)
(39, 32.7275)
(40, 34.21562500000001)
(41, 33.645624999999995)
(42, 31.2625)
(43, 31.9025)
(44, 35.513124999999995)
(45, 31.249374999999997)
(46, 33.46)
(47, 33.355624999999996)
(48, 32.53)
(49, 34.178125)
(50, 35.608125)
};

\addplot[color=orange,mark=x]
coordinates {
(0, 72)
(1, 72)
(2, 38.62)
(3, 33.41)
(4, 27.35)
(5, 26.53)
(6, 27.73)
(7, 26.18)
(8, 24.5)
(9, 23.58)
(10, 24.62)
(11, 22.6)
(12, 23.52)
(13, 24.09)
(14, 26.06)
(15, 23.97)
(16, 22.71)
(17, 24.27)
(18, 22.35)
(19, 22.69)
(20, 22.6)
(21, 24.56)
(22, 24.95)
(23, 24.57)
(24, 26.62)
(25, 22.09)
(26, 26.09)
(27, 23.61)
(28, 27.24)
(29, 25.99)
(30, 20.59)
(31, 27.23)
(32, 20.58)
(33, 22.67)
(34, 27.53)
(35, 27.41)
(36, 25.42)
(37, 27.15)
(38, 26.4)
(39, 25.37)
(40, 25.36)
(41, 26.89)
(42, 21.67)
(43, 21.95)
(44, 28.51)
(45, 24.91)
(46, 25.34)
(47, 23.26)
(48, 25.66)
(49, 25.96)
(50, 28.64)
};
\legend{Max\@16,  Avg\@16, Min\@16}
\end{axis}

\begin{axis}[legend style={at={(0.85,0.97)}, legend cell align=left}, axis x line=none,          % do NOT draw another x axis
ylabel={\#symmetry-breaking clauses},
  axis y line*=right,     xmin=0,
  xmax=51,
    scaled y ticks = false,
    % 2) pick the tick positions you want
    ytick = {0, 10000,20000,30000},
    % 3) supply matching labels
    yticklabels = {0, 10K,20K,30K}]
    \addplot[color=green!50!black, thick]
coordinates {
(0, 0)
(1, 9247)
(2, 9751)
(3, 10255)
(4, 10759)
(5, 11263)
(6, 11767)
(7, 12271)
(8, 12775)
(9, 13279)
(10, 13783)
(11, 14287)
(12, 14791)
(13, 15295)
(14, 15799)
(15, 16303)
(16, 16807)
(17, 17311)
(18, 17815)
(19, 18319)
(20, 18823)
(21, 19327)
(22, 19831)
(23, 20335)
(24, 20839)
(25, 21343)
(26, 21847)
(27, 22351)
(28, 22855)
(29, 23359)
(30, 23863)
(31, 24367)
(32, 24871)
(33, 25375)
(34, 25879)
(35, 26383)
(36, 26887)
(37, 27391)
(38, 27895)
(39, 28399)
(40, 28903)
(41, 29407)
(42, 29911)
(43, 30415)
(44, 30919)
(45, 31423)
(46, 31927)
(47, 32431)
(48, 32935)
(49, 33439)
(50, 33943)
};
\legend{sb clauses}
\end{axis}

 % \draw[dashed,thick]
 %    (axis cs:11,\pgfkeysvalueof{/pgfplots/ymin})
 % -- (axis cs:11,\pgfkeysvalueof{/pgfplots/ymax});

\end{tikzpicture}
    \caption{Effect of symmetry-breaking parameter \texttt{max\_comp} on the runtime of $\Phi_7$.}
    \label{fig:sym-param}
    \end{subfigure}
    \begin{subfigure}{0.49\linewidth}
        \centering
    \input{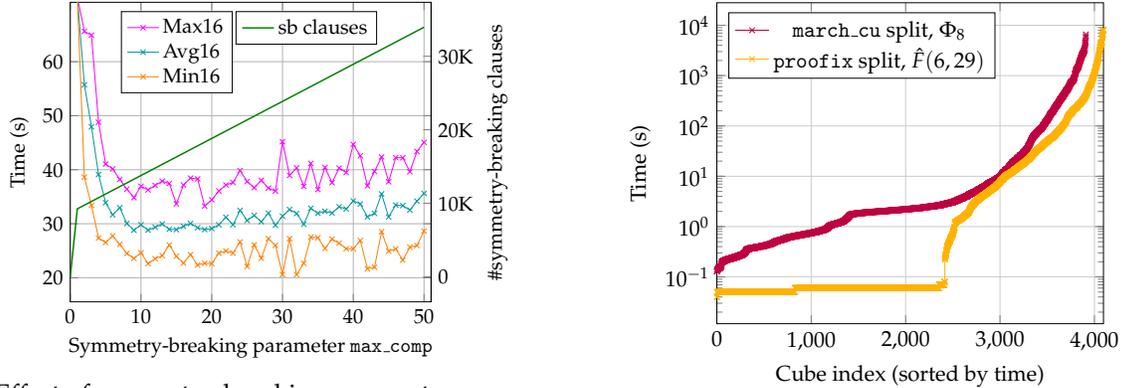}
    \caption{Cube And Conquer splits for $\Phi_8$ and $\hat{f}(6).$}
    \label{fig:cubing-plots}
    \end{subfigure}
    \caption{Experimental results.}
\end{figure}

\paragraph{Hard Instances.}

With the encoding from~\Cref{sec:enc}, and setting the symmetry-breaking parameter to $30$, we were able to prove the unsatisfiability of $\Phi_8$. More precisely, we first ran \texttt{kissat} with the \texttt{-o} option to simplify the formula, for $20$ million conflicts, and then used \texttt{march\_cu} with depth $12$, resulting in 3911 cubes. The cubes were then assigned to $64$ processors running~\texttt{cadical}, with the $i$-th cube being assigned to the $i \bmod 64$-th core. The total process runtime was 116.32 hours, and the total wall-clock time was 3.98 hours. The average time per cube was 107 seconds. 
The satisfiability of the formula $\fencprime(6, \nicefrac{28}{32})$, corresponding to the computation of $\hat{f}(6)$, is shown in a few seconds.
The unsatisfiable formula $\fencprime(6, \nicefrac{29}{32})$ was cubed similarly. 
The main difference is that since this formula involves a cardinality constraint, the splitting tool~\texttt{proofix} worked better, choosing $4$ out of $12$ splitting variables to be auxiliary variables of the~\emph{modulo totalizer} encoding~\citep{mtotalizer}. In this case, we generated 4096 cubes, which were completed in 124.46 hours of total process time, or 14.6 hours of wall-clock time. As can be appreciated in~\Cref{fig:cubing-plots}, the runtimes between the two formulas are very similar, with the split for $\fencprime(6, \nicefrac{29}{32})$ being slightly worse (many easy cubes), which resulted in a larger wall-clock time. In general, the formulas $\fencprime(n, \alpha)$ are significantly harder than $\Phi_n$, thus explaining why we can solve the latter for $n=8$ but the former only until $n=6$.

% \begin{figure}
    
% \end{figure}

\Cref{tab:bounds}~summarizes the computed values of $f$ and $\fprime$. Notably, computing $\fprime(4)$ allows us to match the best previously known asymptotic bound. This highlights the importance of the refined approach that leads to $\fprime$: computing only the $f$-values does not suffice to improve upon the previous bound, as seen for $k=6$, where $f(6)$ fails to meet the threshold while $\fprime(6)$ does.

\begin{table}
\caption{
Threshold for improving \citeauthor{dvorakNoteNorinesAntipodalColouring2020}'s asymptotic result (2nd column) and the values of $f$ and $\hat{f}$.}
    \centering
    \begin{tabular}{@{}crr@{\;}lr@{}}
    \toprule
        $k$ & $\frac{3k}{8} - 1$ & $f(k)$ & & $\hat{f} (k)$ \\
        \midrule
        3 & 0.125 & 1 &\citep{dvorakNoteNorinesAntipodalColouring2020} & 0.5 \\
        4 & 0.5 & 1.25 &\citep{ReuSlides} &  0.5 \\
        5 & 0.875 & 1.25 & &  0.875 \\
        6 & 1.25& 1.5 & & \best{0.875} \\ % TODO check $f$ value again
        %7 & 1.625&&\\
        \bottomrule
    \end{tabular}   
%Required strictly smaller values for an improved asymptotic bound and the computed values for $f$ and $\fprime$.}
    \label{tab:bounds}
\end{table}

% We applied C\&C for  computing $\fprime(6)$ \todo[inline]{Give solving times and evenutally other details like symmetry breaking parameter}. 

\section{Conclusion}\label{sec:conclusion}

We have presented an encoding for Norin's conjecture that is substantially more compact than previous encodings, allowing us to verify the conjecture for $n=8$ in under $4$ hours of wall-clock time, even though~\citet{frankston2024provingnorinesconjectureholds} estimated 57 CPU years for this task. The versatility of our encoding allowed us to compute the average number of color changes in a worst-case coloring for $n = 6$, which, combined with ideas from~\cite{dvorakNoteNorinesAntipodalColouring2020}, has allowed us to improve the asymptotic upper bound on the number of color changes to $0.3125n + O(1)$. \cite{dvorakNoteNorinesAntipodalColouring2020} presented a careful probabilistic analysis, allowing the bound of $(\nicefrac{3}{8} + o(1))n$ instead of the naive $(\nicefrac{2}{3} + o(1))n$ bound one would obtain directly from $f(3) = 1$, whereas the analysis required for our $\hat{f}(6)$-approach is rougher, and thus more scalable.

Several avenues are open for future work. First, while we believe verifying the conjecture for $n = 9$ will require further optimizations, the resulting formula has $2.5$ million clauses, which is within the limits of modern SAT solving. Furthermore, the formula for $\fprime(7)$  has fewer than a million clauses, making it a more immediate candidate for future research. More generally, we hope that our encoding can be extended to other computations for hypercubes of small dimension, which could provide more insight into the conjecture.

\bibliography{references}
\bibliographystyle{plainnat}
% Versatile encoding \\
% Allows checking statements for small dimension potentially allowing to test invariants for proving the conjecture.

\appendix

\section{Alternating Coloring}
\label{appendix:alternating}

In the ``alternating coloring'' of the hypercube, 
where an edge corresponds to flipping the $i$-th bit of a vertex for some $i$, the color of that edge is given by the parity of the sum of all other bits.

\begin{lemma}
    Let $n \in \mathbb{N}$ be odd, let $c : E(Q_n) \to \{0,1\}$ be defined by:
    \[
        ((x_1, \dots, x_i, \dots, x_n),\, (x_1, \dots, 1 - x_i, \dots, x_n)) \mapsto \bigoplus_{j \neq i} x_j.
    \]
    The number of blocking antipodal pairs in $c$ is $2^{n-1}\left(1-O\left(\frac{1}{\sqrt{n}}\right)\right)$
\end{lemma}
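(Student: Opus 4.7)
The plan is to fully characterize which antipodal pairs admit a geodesic with at most one color change under $c_n$, count them exactly, and then apply Stirling's approximation to the resulting binomial coefficient.

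\textbf{Step 1: Reformulate the coloring.} First I would observe that for an edge $\{x,y\}$ with $y = x \oplus e_i$, the color is $c_n(\{x,y\}) = \bigoplus_{j \neq i} x_j = \lvert x \rvert + \lvert y \rvert_{\min} \pmod 2$; more concretely, if $m = \min(\lvert x \rvert, \lvert y \rvert)$ denotes the Hamming weight of the lower endpoint, then $c_n(\{x,y\}) = m \bmod 2$. Equivalently, all edges between the weight-$k$ layer and the weight-$(k{+}1)$ layer receive the same color, $k \bmod 2$.

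\textbf{Step 2: Translate geodesics into lattice paths.} For any geodesic $u = w_0, w_1, \dots, w_n = \bar u$, each step either increases the weight by one (an \emph{up} step, flipping a $0$-bit of $u$) or decreases it by one (a \emph{down} step, flipping a $1$-bit of $u$). If $a = \lvert u\rvert$, any such geodesic corresponds to an arrangement of exactly $n-a$ up steps and $a$ down steps, and any such arrangement is realizable. By Step~1, the color of an up-step from weight $w$ to $w+1$ is $w \bmod 2$, while a down-step from $w$ to $w-1$ has color $(w-1)\bmod 2$.

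\textbf{Step 3: Characterize color changes.} A short case check shows that two consecutive steps produce a color change if and only if they go in the \emph{same} direction. Hence if the step sequence decomposes into $b$ maximal blocks of identical direction, the geodesic has exactly $n - b$ color changes. Since the blocks alternate U/D, the numbers of U-blocks and D-blocks differ by at most one. Thus a geodesic with at most one color change requires $b \in \{n-1, n\}$, which together with the constraint on block counts forces $\lvert (n-a) - a\rvert \leq 1$, i.e., $a \in \{(n-1)/2, (n+1)/2\}$ (the two cases $b=n$ and $b=n-1$ both give the same constraint). Conversely, for such $a$, a perfectly alternating step sequence exists and gives a monochromatic geodesic.

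\textbf{Step 4: Count and estimate.} An antipodal pair $\{u, \bar u\}$ is non-blocking iff $\{\lvert u \rvert, \lvert \bar u \rvert\} = \{(n-1)/2,(n+1)/2\}$, yielding $\binom{n}{(n-1)/2}$ non-blocking pairs out of $2^{n-1}$ total antipodal pairs. By Stirling, $\binom{n}{(n-1)/2} = \Theta(2^n/\sqrt{n})$, so
\[
\mu(n) \;\geq\; 2^{n-1} - \binom{n}{(n-1)/2} \;=\; 2^{n-1}\Bigl(1 - O(1/\sqrt{n})\Bigr),
\]
as claimed. The main obstacle is really only Step~3: once one notices that color changes correspond to direction repetitions rather than direction changes, the rest is combinatorics and a standard central-binomial estimate.
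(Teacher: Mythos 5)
Your proof is correct and takes essentially the same route as the paper. The paper's key step is the algebraic observation that two consecutive flips at coordinates $i,j$ from $u$ cause a color change iff $u_i = u_j$, i.e.\ both flips go in the same direction; your Step~3 phrases this as ``color change iff direction repetition'' and packages it cleanly via maximal blocks of a lattice path, which is a nice formalization but not a different idea. Both proofs then land on the same exact count of blocking pairs, $2^{n-1}-\binom{n}{(n-1)/2}$ (the paper writes it as $2^{2k}-\binom{2k+1}{k}$ for $n=2k+1$), and finish with the same central-binomial/Stirling estimate. One small mismatch: the lemma asserts the count of blocking pairs for this specific coloring $c$, which you in fact compute exactly; the final ``$\mu(n)\geq\dots$'' is a valid corollary but not what the statement asks for, so you could simply drop the $\mu(n)$ and report the equality.
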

\begin{proof}
Consider a path $u\rightarrow v \rightarrow w$, and suppose $u$ and $v$ differ at position $i$ and $v$ and $w$ at position $j$.
The color of the edge $uv$ is $c_{uv} = \bigoplus_{l\not\in \{i, j\}} u_l \; \oplus u_j$.
The color of the edge $vw$ is $c_{vw} = \bigoplus_{l\not\in \{i, j\}} v_l \; \oplus v_i$.
We have $c_{uv} \oplus c_{vw} = u_j \oplus v_i = u_j \oplus (1-u_i) = 1 \oplus u_j \oplus u_i$.
So, there is a color change at $v$ if and only if $u_i = u_j$, i.e., the two consecutive coordinates both flip $0$ to $1$ or both $1$ to $0$.

An antipodal geodesic from some vertex $v$ must flip all coordinates one by one.
If the numbers of $0$s and $1$s in $v$ differ by at most one, the flips can be interleaved in such a way that no color changes are needed.
If they differ by $2$, it can be done with one color change.
If they differ by at least $3$, two color changes are needed.

Let $\beta(v)$ be the difference between the number of $1$s and $0$s in $v$, namely $\beta(v) = \sum v_i - \sum (1-v_i) = 2\sum v_i - n = \sum v_i - \sum \bar{v}_i = -\beta(\bar v)$.
Represent an antipodal pair by the vertex which has more $0$s than $1$s, i.e., where $\beta(v) < 0$ (recall $n$ is odd).
We want to count the number of $v$ for which $\beta(v) \leq -3$.
We have $\beta(v) = 2\sum v_i - n \leq -3$ if and only if $\sum v_i \leq \frac{n-3}{2}$.
The number of such $v$ is
\[ g(n) = \sum_{i = 0}^{\frac{n-3}{2}} \binom{n}{i} .\]
%Some values of $g$ are $g(3) = 1, g(5) = 6, g(7) = 29, g(9) = 130$.

Write $n=2k+1$, let $h(k) = g(2k+1)$, and consider that

\begin{align*}
    2^{2k+1} = 2^n = \sum_{i=0}^{n} \binom{n}{i} = 
    2h(k) + \binom{2k+1}{k} + \binom{2k+1}{k+1} = 
    2\left(h(k) + \binom{2k+1}{k}\right),
\end{align*} 

and thus

\[ h(k) = 2^{2k} - \binom{2k+1}{k} .\]
%We aim to show for $k \geq 4$ that $h(k) > 2^{2k-1}$, i.e., equivalently that $\binom{2k+1}{k} < 2^{2k-1}$.
Plug in
\[ \binom{2k+1}{k} = \binom{2k}{k} + \binom{2k}{k-1} = \binom{2k}{k} \left(1 + \frac{k}{k+1}\right),\]
and apply the upper bound on the central binomial coefficient that arises from Stirling's approximation,
\( \binom{2k}{k} \leq \frac{2^{2k}}{\sqrt{\pi k}}\),
to get
\begin{align*}
\binom{2k+1}{k} \leq \left(1 + \frac{k}{k+1}\right)\frac{2^{2k}}{\sqrt{\pi k}} =
%O \left( 2^{2k} \frac{1}{\sqrt{k}} \right) =
O \left( 2^{n-1} \frac{1}{\sqrt{n}} \right).
%= 2^{2k}\left(\frac{2k+1}{(k+1)\sqrt{\pi k}}\right) < 2^{2k}\left(\frac{2k+1}{(k+1)\sqrt{3k}}\right).
\end{align*}
%We already know that $130 = h(4) > 2^{2\cdot 4 -1} = 128$, so let us assume $k \geq 5$.
%We need to show that for $k \geq 5$
%\[ \frac{2k+1}{(k+1)\sqrt{3k}} \leq \frac{1}{2},\]
%which is equivalent to
%[ 4(2k+1)^2 \leq (k+1)^2 3k,\]
%which is equivalent to
%\[ 3k^3 - 10k^2 - 13k - 4 \geq 0 .\]
%It can be shown that the LHS has exactly one positive root, which lies between $4$ and $5$, and is positive %thereafter, completing the proof.

%More generally, since
%\[ \lim_{n \to \infty} \frac{g(n)}{2^{n-1}} = \lim_{k \to \infty} \frac{h(k)}{2^{2k}} = \lim_{k \to \infty} 1 - %\frac{\binom{2k+1}{k}}{2^{2k}} \geq 1 - \lim_{k \to \infty} \frac{2k+1}{(k+1)\sqrt{3k}} = 1, \]
%it holds that for any $\varepsilon>0$ for sufficiently large $n$ at least $1-\varepsilon$ fraction of antipodal pairs are bad (require at least two switches). In fact, the proportion of good pairs is $O(\frac{1}{\sqrt{k}})$.
\end{proof}

For even $n$, it is possible to copy the construction for odd $n-1$, ignoring the last coordinate completely (and coloring edges that flip it arbitrarily).
This yields $2g(n-1)$ bad pairs for even $n$, and the asymptotic result is the same.

It turns out that the alternating coloring also shows that the $f$-based approach cannot prove a result better than $c \cdot \sqrt{n}$ for some absolute constant $c$.

\begin{lemma}\label{lemma:color_changes_weight}
    For any vertex $v \in V(Q_n)$, the number of color changes from $v$ to $\bar{v}$ in any path $\pi$ in the coloring $c$ defined above is at least $|\beta(v)| - 1$.
\end{lemma}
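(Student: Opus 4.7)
The plan is to convert the color-change count into a combinatorial question about the sequence of $\pm$-signed moves along the path, then invoke an elementary extremal bound on such sequences.

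First I would assign each edge of $Q_n$ a \emph{sign}: $+$ if traversing it increases the number of $1$-bits, and $-$ otherwise. Along any path $\pi = v_0, \ldots, v_m$ with $v_0 = v$ and $v_m = \bar v$, a telescoping sum shows that the number $p$ of $+$-edges minus the number $q$ of $-$-edges equals $\sum_j(\bar v_j - v_j) = n - 2\sum_j v_j = -\beta(v)$. Hence $|p - q| = |\beta(v)|$, irrespective of the length of $\pi$.

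Next I would translate color changes into the language of signs. A direct XOR computation using the edge-color formula $\bigoplus_{j \ne i} x_j$ (for the edge at vertex $x$ flipping coordinate $i$) shows that, for consecutive edges of $\pi$ flipping distinct positions $i_k \ne i_{k+1}$, a color change at $v_k$ occurs iff $(v_{k-1})_{i_k} = (v_{k-1})_{i_{k+1}}$. But the edge at step $k$ is a $+$-edge precisely when $(v_{k-1})_{i_k} = 0$, and analogously for step $k+1$, so this equality is exactly the condition that both edges carry the same sign. Thus the number of color changes along $\pi$ equals the number of consecutive equal-sign pairs in the sign sequence $s_1 \cdots s_m$.

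The remaining step is an elementary combinatorial bound: in any $\pm$-sequence with $p$ pluses and $q$ minuses (WLOG $p \geq q$), the number of consecutive equal-sign pairs is at least $p - q - 1$. Writing the sequence as $r$ maximal runs of like signs, the equal-pair count equals $m - r$; since runs must alternate in sign, one has $r \leq 2q + 1$ whenever $p > q$, yielding $m - r \geq p - q - 1 = |\beta(v)| - 1$, and the $p = q$ case is trivial. The only mild obstacle is the degenerate situation $i_k = i_{k+1}$ (the path backtracks along an edge), where the two edges share a color and necessarily carry opposite signs, so the ``same sign iff color change'' correspondence remains valid; the core argument is then a single counting identity.
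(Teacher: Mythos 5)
Your proof is correct, and it arrives at the same invariant as the paper's but by a genuinely different route. The paper tracks the running weight $w_i := \beta(\pi_i)$, observes that $w_{i+2}-w_i \in \{-4,0,4\}$ with $\pm 4$ exactly at a color change, and then bounds $\bigl|\sum_{i} (w_{i+2}-w_i)\bigr|$ by a telescoping identity that collapses to $|w_m + w_{m-1} - w_2 - w_1| \geq 4|\beta(v)| - 4$; dividing by the per-term cap of $4$ gives the lower bound. You instead extract the sign sequence of the steps directly, note $|p-q| = |\beta(v)|$, reduce color changes to equal-sign adjacencies, and then apply a standalone extremal fact about runs in $\pm$-sequences ($r \leq 2\min(p,q)+1$, hence $m - r \geq |p-q|-1$). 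The two arguments encode the same underlying observation --- a color change is precisely two consecutive steps in the same direction in weight-space --- but your version isolates it as a self-contained lemma about binary strings, which is arguably more transparent and reusable, whereas the paper's version is a compact algebraic telescope that avoids any case analysis on run structure. Both are elementary and tight; your discussion of the backtracking case $i_k = i_{k+1}$ (irrelevant for simple paths but needed if $\pi$ is a walk) is a nice completeness check that the paper's proof handles implicitly since the telescoping never assumes $i_k \neq i_{k+1}$.
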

\begin{proof}
Let $\pi = \pi_1, \pi_2, \pi_3, \ldots, \pi_{m}$, with $\pi_1 := v$ and $\pi_m := \bar{v}$, for some $m \geq n$. 
    For ease of notation, let us define 
    \(
        w_i := \beta(\pi_i)
    \). 
    Then, let $\Delta_i := w_{i+1} - w_i$ for $1 \leq i \leq m-1$, and note that
    \[
 |w_{i+2} - w_{i}| = \begin{cases}
         4 & \text{if }\pi_{i}, \pi_{i+1}, \pi_{i+2} \text{ is a color change}\\
         0 & \text{otherwise}.
     \end{cases} 
    \]
    Now, observe that
    \begin{align*}
    \left|\sum_{i=1}^{m-2} w_{i+2} - w_{i}\right| &= |w_m + w_{m-1} - (w_2 + w_1)|\\
    &= |2w_m - \Delta_m - 2w_1 - \Delta_2|\\
    &\geq 2|w_m - w_1| - |\Delta_m + \Delta_2|\\
    &\geq 2|w_m - w_1| - 4 \tag{as $|\Delta_i| \leq 2$}\\
    &= 4|w_1| - 4. \tag{as $w_m = -w_1$}
    \end{align*}
   Thus, there must be at least $|w_1| - 1$ terms of the sum $\sum_{i=1}^{m-2} w_{i+2} - w_{i}$ whose absolute value is at least $4$, which implies at least $|w_1|-1$ color changes.
\end{proof}

\begin{lemma}
    There exists some $\alpha > 0$ such that for any sufficiently large $k$,  we have $f(k) \geq \alpha \cdot \sqrt{k}$. In particular, $\alpha =0.7$ suffices.
\end{lemma}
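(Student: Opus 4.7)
The plan is to use the alternating coloring $c_k$ from the start of \Cref{appendix:alternating} as the witness coloring. By \Cref{lemma:color_changes_weight}, every path from $v$ to $\bar{v}$ under $c_k$ incurs at least $|\beta(v)| - 1$ color changes, and in particular every antipodal geodesic does. Hence $s_{c_k, v, \bar{v}} \geq |\beta(v)| - 1$ for every $v$, which immediately gives
\[
f(k) \;\geq\; \frac{1}{2^k} \sum_{v \in V(Q_k)} s_{c_k, v, \bar{v}} \;\geq\; \frac{1}{2^k}\sum_{v \in V(Q_k)} |\beta(v)| \;-\; 1.
\]

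The remaining task is to evaluate the average of $|\beta(v)|$ over uniformly random $v \in \{0,1\}^k$. Writing $S = \sum_i v_i$, we have $|\beta(v)| = |2S - k|$, and $S$ is distributed as $\mathrm{Bin}(k, 1/2)$. Using the classical identity for the mean absolute deviation of a symmetric binomial together with Stirling's approximation applied to the central binomial coefficient, one obtains the asymptotic
\[
\mathbb{E}\bigl[|2S - k|\bigr] \;=\; \sqrt{\tfrac{2k}{\pi}} \;+\; O(1)
\]
as $k \to \infty$. Combining this with the previous inequality yields $f(k) \geq \sqrt{2/\pi}\cdot\sqrt{k} - 1 - o(1)$.

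Finally, since $\sqrt{2/\pi} = 0.7978\ldots > 0.7$, there exists a threshold $k_0$ such that $f(k) \geq 0.7\sqrt{k}$ for all $k \geq k_0$; choosing $\alpha = 0.7$ (or indeed any $\alpha < \sqrt{2/\pi}$) completes the proof. The only nontrivial step is the asymptotic estimate of $\mathbb{E}[|2S-k|]$, but this is a standard calculation: the constant must be pinned down precisely enough to verify that it exceeds $0.7$, which is comfortable since $\sqrt{2/\pi}$ exceeds $0.7$ by almost $0.1$, leaving ample slack to absorb the $-1$ and the lower-order error term for all sufficiently large $k$.
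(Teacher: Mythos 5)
Your proposal is correct and follows essentially the same route as the paper: the alternating coloring together with \Cref{lemma:color_changes_weight} reduce the problem to lower-bounding the average of $|\beta(v)|$ over $V(Q_k)$, which both you and the paper then estimate via the central binomial coefficient. The paper computes that average exactly by a short telescoping argument and applies the bound $\binom{2m}{m} > 4^m/\sqrt{4m}$ (yielding the constant $\sqrt{1/2}\approx 0.707$), whereas you invoke the classical mean-absolute-deviation identity for the binomial with Stirling to get the sharper asymptotic constant $\sqrt{2/\pi}\approx 0.798$; both comfortably exceed $0.7$.
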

\begin{proof}
    We consider the same coloring $c$ as above. Let us denote by $\mathcal{P}(v)$ the set of all paths between a vertex $v$ and its antipodal $\bar{v}$, and use notation $s(\pi, c)$ to denote the number of color changes of $\pi$ under coloring $c$. Thus,
    \begin{align*}
    f(k) &= \frac{1}{2^k} \sum\limits_{v \in V(Q_k)}  \min\limits_{\pi \in \mathcal{P}(v)} s(\pi, c)\\
    &=  \frac{1}{2^k} \sum_{\ell = 0}^{k} \sum_{\|v\|_1 = \ell} \min\limits_{\pi \in \mathcal{P}(v)} s(\pi, c)\\
     &\geq  \frac{1}{2^k} \sum_{\ell = 0}^{k}  |k - 2\ell| \binom{k}{\ell} - 1. \tag{Using~\Cref{lemma:color_changes_weight}}
    \end{align*}

Let us assume that $k = 2m+1$, and the even case will be a trivial consequence.
We can analyze the sum $\sum_{\ell = 0}^{k}  |k - 2\ell| \binom{k}{\ell}$  by using its symmetry around $m$. Indeed,
\[
\sum_{\ell = 0}^{k}  |k - 2\ell| \binom{k}{\ell} = 2\sum_{\ell =0}^{m} (k - 2\ell) \binom{k}{\ell},
\]
and 
\begin{align*}
(k-2\ell) \binom{k}{\ell} = (k-\ell)\binom{k}{k -\ell} - \ell \binom{k}{\ell}
= \frac{k!}{(k-\ell - 1)! \cdot \ell !} - \frac{k!}{(\ell-1)! (k-\ell)!}
= k \binom{k-1}{\ell} - k \binom{k-1}{\ell - 1},
\end{align*}
where we have adopted the convention $(-1)! = 1, \binom{k-1}{-1} = 0.$
Therefore,
\begin{align*}
\sum_{\ell = 0}^{k}  |k - 2\ell| \binom{k}{\ell} = 2k\sum_{\ell=0}^m \binom{k-1}{\ell} - \binom{k-1}{\ell - 1} 
= 2k\binom{k-1}{m},
\end{align*}
where the last equality is a telescopic argument, using again $\binom{k-1}{-1} = 0$.
To conclude, note that 
\[
2k\binom{k-1}{m} = 2k\binom{2m}{m} > 2k \frac{4^m}{\sqrt{4m}} = \frac{\sqrt{k} 2^{k}}{\sqrt{2}},
\]
and thus 
\[
  f(k) > \frac{1}{2^k}\left(\frac{\sqrt{k} 2^{k}}{\sqrt{2}} - (k+1)\right) = \sqrt{\nicefrac{1}{2}} \cdot \sqrt{k} - \frac{k+1}{2^k}.
\]
For $k \geq 5$ we get
$f(k) \geq 0.7 \sqrt{k}$.
For the even $k$ case, it suffices to use that $f$ is non-decreasing~\citep{ReuSlides}, as then
\begin{align*}
    f(k+1) &\geq  f(k)\\
    &\geq \sqrt{\frac{1}{2}} \cdot \sqrt{k} - \frac{k+1}{2^k}\\ 
    &= \sqrt{\frac{k}{2(k+1)}} \sqrt{k+1} - \frac{k+1}{2^k},
\end{align*}

and $ \sqrt{\frac{k}{2(k+1)}} - \frac{k+1}{2^k} > 0.7$ for $k \geq 25$, which yields that $f(k+1) > 0.7\sqrt{k+1}$.    
\end{proof}

\end{document}